\documentclass[12pt,twoside,reqno]{amsart}

\usepackage{version}         % include, exclude and comment
%\includeversion{version:...}
%\excludeversion{version:...}

% \usepackage[right]{showlabels}
\usepackage{color}

\usepackage{multirow}
\usepackage[OT1]{fontenc}
\usepackage{type1cm}
\usepackage{amssymb}
\usepackage{mathrsfs}
\usepackage[english]{babel}
\usepackage[latin1]{inputenc}
\usepackage[T1]{fontenc}
\usepackage{palatino}
\usepackage{amsfonts}
\usepackage{amsmath}
\usepackage{amssymb}
\usepackage{amsthm}
\usepackage{bbm}
\usepackage{extarrows}
\usepackage{graphicx}
\usepackage[usenames,dvipsnames]{xcolor}
\usepackage{wrapfig}
\usepackage{type1cm}
\usepackage[bf]{caption}
\usepackage{esint}
\usepackage{version}
\usepackage[colorlinks = true, citecolor = black, linkcolor = black, urlcolor = black, pdfstartview=FitH]{hyperref}
%\usepackage[usenames]{color}
%\definecolor{darkblue}{rgb}{0,0.08,0.45}
\usepackage{caption}
\usepackage{tikz}
\usepackage{bbding, wasysym}
\usepackage{enumitem}

\usepackage[left=2.75cm,top=3cm,right=2.75cm]{geometry}
\geometry{a4paper,centering}
%\geometry{letter,centering}

%\renewcommand{\captionlabeldelim}{.}
%\renewcommand{\thefigure}{\Alph{figure}}
%\setcaptionwidth{.9\textwidth}

\allowdisplaybreaks[4]

\numberwithin{equation}{section}

\theoremstyle{plain}
\newtheorem{theorem}{Theorem}[section]

\newtheorem{proposition}[theorem]{Proposition}
\newtheorem{lemma}[theorem]{Lemma}

\theoremstyle{remark}
\newtheorem{remark}[theorem]{Remark}

\newtheorem*{ack}{Acknowledgement}

\theoremstyle{definition}
\newtheorem{definition}[theorem]{Definition}

% [[commands]]
\newcommand{\R}{\mathbb{R}}

\newcommand{\N}{\mathbb{N}}

\newcommand{\w}{\mathbf{w}}

\newcommand{\cE}{\mathcal{E}}
\newcommand{\cM}{\mathcal{M}}
\newcommand{\cH}{\mathcal{H}}

\newcommand{\cF}{\mathcal{F}}

\newcommand{\cP}{\mathcal{P}}

\newcommand{\cA}{\mathcal{A}}

\renewcommand{\emptyset}{\varnothing}
\renewcommand{\epsilon}{\varepsilon}
\renewcommand{\rho}{\varrho}
\renewcommand{\phi}{\varphi}

\renewcommand{\i}{\mathtt{i}}

 % Contractive matrices of GL_d(\R). How to denote?

%%%%%%%%%%%%%%%%%%%%%%%%%%%%%%%%%%%%%%%%%%%%%%%%%%%%%%%%%%%%%%%%%%%%%
\begin{document}

\title{Hausdorff dimension of frequency sets in beta-expansions}

\author{Yao-Qiang Li}

\address{School of Mathematics \\
         South China University of Technology \\
         Guangzhou, 510641 \\
         P.R. China}
\email{scutyaoqiangli@qq.com}

\address{Institut de Math\'ematiques de Jussieu - Paris Rive Gauche \\
         Sorbonne Universit\'e - Campus Pierre et Marie Curie\\
         Paris, 75005 \\
         France}
\email{yaoqiang.li@imj-prg.fr}

\subjclass[2010]{Primary 11K55; Secondary 28A80.}
\keywords{beta-expansion, Hausdorff dimension, digit frequency, variational formula, Markov measure, pseudo-golden ratio}

%\date{}

\begin{abstract}
By applying a 2014 result on the distribution of full cylinders, we give a proof of the useful folklore: for any $\beta>1$, the Hausdorff dimension of an arbitrary set in the shift space $S_\beta$ is equal to the Hausdorff dimension of its natural projection in $[0,1]$. It has been used in some former papers without proof. Then we clarify that for calculating the Hausdorff dimension of frequency sets using variational formulae, one only needs to focus on the Markov measures of explicit order when the $\beta$-expansion of $1$ is finite. Concretely, it suffices to optimize a function with finitely many variables under some restrictions. Finally, as an application, we obtain an exact formula for the Hausdorff dimension of frequency sets for an important class of $\beta$'s, which are called pseudo-golden ratios (also called multinacci numbers).
\end{abstract}

\maketitle

\section{Introduction}

Let $\beta>1$ be a real number. Given $x\in[0,1)$, the most common way to $\beta$-expand $x$ as
\begin{equation}\label{expansion}
x=\sum_{n=1}^\infty\frac{w_n}{\beta^n}
\end{equation}
is to use the greedy algorithm, which generates (greedy) $\beta$-expansion. It was introduced by R\'enyi \cite{R57} in 1957 and widely studied in the following decades until now \cite{B89,FWL19,FS92,LPWW14,P60,S97,S80}. In some other literature, for examples \cite{ABK19,B18,BK19,DK09,L20,S03}, a $\beta$-expansion of a point $x$ is defined to be a sequence $(w_n)_{n\ge1}$ satisfying (\ref{expansion}). Then a point may have many different $\beta$-expansions including the greedy one. Throughout this paper, we use \textit{$\beta$-expansion} to denote the greedy one defined by the $\beta$-transformation (see Section 2 for definition).

Let $\Sigma_\beta$ be the set of admissible sequences (see Definition \ref{admissible}) and $S_\beta$ be its (topological) closure in the metric space $(\mathcal{A}_\beta^\N,d_\beta)$, where $\mathcal{A}_\beta$ is the alphabet $\{0,1,\cdots,\lceil\beta\rceil-1\}$, $\lceil\beta\rceil$ is the smallest integer no less than $\beta$, $\N$ is the set of positive integers $1,2,3,\cdots$ and $d_\beta$ is the usual metric on $\mathcal{A}_\beta^\N$ defined by
$$d_\beta(w,v):=\beta^{-\inf\{k\ge0: \text{ }w_{k+1}\neq v_{k+1}\}}\quad\text{for }w=(w_n)_{n\ge1},v=(v_n)_{n\ge1}\in\mathcal{A}_\beta^\N.$$
Besides, we use $\pi_\beta:S_\beta\rightarrow[0,1]$ to denote the natural projection map given by
$$\pi_\beta(w):=\frac{w_1}{\beta}+\frac{w_2}{\beta^2}+\cdots+\frac{w_n}{\beta^n}+\cdots \quad \text{for } w=(w_n)_{n\ge1}\in S_\beta.$$

As the first main result in this paper, the following theorem is a folklore result used in some former papers without explicit proof (for example \cite[Section 5]{T12}).

\begin{theorem}\label{dimension equal}
Let $\beta>1$. For any $Z\subset S_\beta$, the Hausdorff dimension of $Z$ in $(S_\beta,d_\beta)$ is equal to the Hausdorff dimension of its natural projection in $[0,1]$, i.e.,
$$\dim_H(Z,d_\beta)=\dim_H\pi_\beta(Z).$$
\end{theorem}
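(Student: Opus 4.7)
The plan is to split the equality into two inequalities and treat them separately. For the inequality $\dim_H \pi_\beta(Z) \le \dim_H(Z, d_\beta)$, I would first verify that the projection $\pi_\beta$ is Lipschitz on $(S_\beta, d_\beta)$: if $d_\beta(w,v) = \beta^{-k}$ then $w_i = v_i$ for $1 \le i \le k$, so
\[
|\pi_\beta(w) - \pi_\beta(v)| \;\le\; \sum_{n>k}\frac{\lceil\beta\rceil - 1}{\beta^n} \;=\; \frac{\lceil\beta\rceil - 1}{\beta - 1}\,\beta^{-k} \;=\; \frac{\lceil\beta\rceil - 1}{\beta - 1}\,d_\beta(w,v).
\]
Since Lipschitz maps cannot increase Hausdorff dimension, this settles one direction, and it works cleanly regardless of whether $\pi_\beta$ is injective.

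For the reverse inequality $\dim_H(Z, d_\beta) \le \dim_H \pi_\beta(Z)$, the strategy is to transfer an efficient Hausdorff cover of $\pi_\beta(Z) \subset [0,1]$ into a comparable cover of $Z$ inside $(\cA_\beta^\N, d_\beta)$. Given $s > \dim_H \pi_\beta(Z)$ and $\eta > 0$, I would take a cover of $\pi_\beta(Z)$ by open intervals $\{I_j\}$ with $\sum_j |I_j|^s < \eta$. For each $j$, set $n_j := \lceil -\log_\beta |I_j| \rceil$ and consider every cylinder $[w_1 \cdots w_{n_j}] \subset \cA_\beta^\N$ whose $\pi_\beta$-image meets $I_j$; as $j$ varies, the totality of these cylinders covers $Z$, and each has $d_\beta$-diameter $\beta^{-n_j} \le \beta|I_j|$. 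If the number of such cylinders per index $j$ is uniformly bounded by a constant $K = K(\beta)$, then the total $s$-weight of the transferred cover is at most $K\beta^s\eta$, giving $\cH^s_{d_\beta}(Z) = 0$ and hence $\dim_H(Z, d_\beta) \le s$, from which the inequality follows by letting $s \downarrow \dim_H\pi_\beta(Z)$.

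The main obstacle is establishing this uniform bound $K(\beta)$. Because $\pi_\beta$ sends distinct level-$n$ cylinders of $\cA_\beta^\N$ to sub-intervals of $[0,1]$ with pairwise disjoint interiors, counting cylinders meeting $I_j$ becomes a packing problem: how many such sub-intervals can crowd into a fixed enlargement of $I_j$? The difficulty is that non-full cylinders may be arbitrarily shorter than $\beta^{-n_j}$, so \emph{a priori} many of them could accumulate near one point. Here I would invoke the announced recent result on the distribution of full and non-full words in $\beta$-expansions: it forbids long runs of non-full level-$n$ cylinders from clustering, in the sense that within any short chain of consecutive non-full cylinders one encounters a full one of length $\beta^{-n}$. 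This caps how densely non-full cylinders can accumulate near any single point of $[0,1]$ and therefore supplies the required constant $K(\beta)$, closing the cover-transfer argument.
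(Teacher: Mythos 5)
Your overall architecture matches the paper's: Lipschitz continuity of $\pi_\beta$ for the inequality $\dim_H\pi_\beta(Z)\le\dim_H(Z,d_\beta)$, and a transfer of covers of $\pi_\beta(Z)$ into covers of $Z$ by cylinders, controlled by the distribution of full and non-full cylinders, for the reverse inequality. The Lipschitz half is fine. The gap is in the hard half: the uniform bound $K=K(\beta)$ you postulate is not what the cited result supplies, and it fails for general $\beta>1$. What Proposition \ref{coveringproperties} (Bugeaud--Wang) actually gives is that $B(x,\beta^{-n})\cap[0,1)$ is covered by at most $4(n+1)$ cylinders of order $n$ --- a bound growing \emph{linearly in $n$}. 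That growth is essentially unavoidable: the distribution result guarantees a full cylinder (of length exactly $\beta^{-n}$) within every $n+1$ consecutive order-$n$ cylinders, but between two full ones there may be up to $n$ non-full cylinders, each of length far smaller than $\beta^{-n}$, and all of them can crowd into a single interval of length comparable to $\beta^{-n}$. With the correct bound your transferred cover has $s$-weight of order $\sum_j n_j\,\beta^{-n_j s}$, which is not dominated by a constant multiple of $\sum_j|I_j|^s$, so the conclusion $\cH^s(Z,d_\beta)=0$ does not follow as you wrote it.

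The repair is exactly the paper's Lemma \ref{measurescompare}: sacrifice an arbitrarily small $\epsilon>0$ in the exponent. For covers of sufficiently small mesh, every relevant $n_j$ exceeds a threshold $n_0(\epsilon,s,\beta)$ beyond which $8n_j\beta^{s}\le\beta^{(n_j+1)\epsilon}$, whence $\sum_j 8n_j\beta^{-n_j s}\le\sum_j|I_j|^{s-\epsilon}$ and therefore $\cH^s(Z,d_\beta)\le\cH^{s-\epsilon}(\pi_\beta(Z))$. One then passes to the limit $\epsilon\downarrow0$ at the level of dimensions rather than measures: if $t<s<\dim_H(Z,d_\beta)$ then $\cH^{t}(\pi_\beta(Z))=\infty$, so $\dim_H\pi_\beta(Z)\ge t$. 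Your plan is thus salvageable, but the single step ``the number of such cylinders per index $j$ is uniformly bounded by a constant $K(\beta)$'' must be replaced by the polynomial bound together with this $\epsilon$-loss bookkeeping.
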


It is worth to note that $\dim_H(Z,d_\beta)\ge\dim_H\pi_\beta(Z)$ follows immediately from the fact that $\pi_\beta$ is Lipschitz continuous. But even if omitting countable many points to make $\pi_\beta$ invertible, the inverse is not Lipschitz continuous. This makes the proof of the inverse inequality much more intricate. We will prove it by using a covering property (see Proposition \ref{coveringproperties}) given by Bugeaud and Wang in 2014 deduced from the distribution of full cylinders.

In the following, we consider the digit frequencies of the expansions. This is a classical research topic began by Borel in 1909. His well known normal number theorem \cite{B09} implies that, for Lebesgue almost every $x\in[0,1]$, the digit frequency of zeros in its binary expansion is equal to $\frac{1}{2}$. Given $\beta>1$, for any $a\in[0,1]$, let $F_{\beta,a}$ be the set of those $x$'s with digit frequencies of $0$'s equal to $a$ in their $\beta$-expansions. That is the \textit{frequency set}
$$F_{\beta,a}:=\Big\{x\in [0, 1): \lim_{n\to\infty}\frac{\#\{1\le k\le n: \varepsilon_k(x,\beta)=0\}}{n}=a\Big\},$$
where $\epsilon_k(x,\beta)$ is the $k$th digit in the $\beta$-expansion of $x$ and $\#$ denotes the cardinality. For $\beta=2$, Borel's normal number theorem means that $F_{2,\frac{1}{2}}$ is of full Lebesgue measure, and implies that $F_{2,a}$ is of zero Lebesgue measure for $a\neq\frac{1}{2}$. This leaves a natural question: How large is $F_{2,a}$ in the sense of dimension? Forty years later, another well known result given by Eggleston \cite{E49} showed that
$$\dim_H F_{2,a}=\frac{-a\log a-(1-a)\log(1-a)}{\log2}\quad\text{for all }a\in[0,1].$$
For the case that $\beta$ is not an integer, the above question, about giving concrete formulae for the Hausdorff dimension of frequency sets, is almost entirely open. Although the Hausdorff dimension of frequency sets can be given by some variational formulae (see for examples \cite{FFW01,TV03,T01}), they are abstract and concrete formulae are very scarce. The only known concrete formula is in \cite[Theorem 1.2]{LLS19}, which contains the well known case that when $\beta=\frac{\sqrt{5}+1}{2}$ is the \textit{golden ratio} (i.e., the $\beta$-expansion of $1$ is $\epsilon(1,\beta)=110^\infty$), we have
$$\dim_HF_{\beta,a}=\frac{a\log a-(2a-1)\log(2a-1)-(1-a)\log(1-a)}{\log\beta}$$
where $\frac{1}{2}\le a\le1$. See for examples \cite{FZ04,LL16}. Note that when $0\le a<\frac{1}{2}$, $F_{\beta,a}=\emptyset$.

As the second main result in this paper, the next theorem takes a step from abstraction to concreteness. It means that for calculating the Hausdorff dimension of frequency sets, we only need to focus on the entropy (see \cite{W82} for definition) with respect to Markov measures of explicit order (see Definition \ref{k-Markov measure}) when $\beta\in(1,2)$ and the $\beta$-expansion of $1$ is finite. More concretely, it suffices to optimize a function with finitely many variables under some restrictions.

For $\beta\in(1,\infty)$, let $\Sigma_\beta^n$ be the set of admissible words with length $n\in\N$ and $\Sigma_\beta^*:=\cup_{n=1}^\infty\Sigma_\beta^n$. For any $w\in\Sigma_\beta^*$, define
$$[w]:=\{v\in S_\beta:v\text{ begins with }w\}$$
to be the cylinder in $S_\beta$ generated by $w$.

Let $\sigma$ be the \textit{shift map} on $\cA_\beta^\N$ defined by
$$\sigma(w_1w_2\cdots):=w_2w_3\cdots \quad \text{for } w \in\cA_\beta^\N.$$
We also use $\sigma$ to denote its restriction on $S_\beta$. Let $\cM_\sigma(S_\beta)$ be the set of $\sigma$-invariant Borel probability measures on $S_\beta$ and $h_\mu(\sigma)$ be the \textit{measure-theoretic entropy} of $\sigma$ with respect to the measure $\mu$. We regard $0\log0$, $0\log\frac{0}{0}$, $\max\emptyset$ and $\sup\emptyset$ as $0$ in the following.

\begin{theorem}\label{Markov variational formula}
Let $\beta\in(1,2)$ such that $\epsilon(1,\beta)=\epsilon_1(1,\beta)\cdots\epsilon_m(1,\beta)0^\infty$ for some integer $m\ge2$ with $\epsilon_m(1,\beta)=1$ and let $a\in[0,1]$. Then
$$\dim_HF_{\beta,a}=\frac{1}{\log\beta}\cdot\max\Big\{h_\mu(\sigma):\mu\in\cM_\sigma(S_\beta),\mu[0]=a, \mu \text{ is an } (m-1) \text{-Markov measure}\Big\}.$$
More concretely,
$$\dim_HF_{\beta,a}=\frac{1}{\log\beta}\cdot\max\Big\{\frak{h}_\mu(\beta,m):\mu\text{ is an }(\beta,m,a)\text{-coordinated set function}\Big\},$$
where for a set function $\mu$ defined from $\{[w]:w\in\cup_{n=1}^m\Sigma_\beta^n\}$ to $[0,1]$,
$$\frak{h}_\mu(\beta,m):=-\sum_{w_1\cdots w_m\in\Sigma_\beta^m}\mu[w_1\cdots w_m]\log\frac{\mu[w_1\cdots w_m]}{\mu[w_1\cdots w_{m-1}]},$$
and $\mu$ is called \textit{$(\beta,m,a)$-coordinated} if
$$\mu[0]=a,\text{ }\mu[1]=1-a,\quad\sum_{\substack{v=0,1\\wv\in\Sigma_\beta^*}}\mu[wv]=\mu[w]\quad\text{and}\quad\sum_{\substack{u=0,1\\uw\in\Sigma_\beta^*}}\mu[uw]=\mu[w]$$
for all $w\in\cup_{n=1}^{m-1}\Sigma_\beta^n$.
\end{theorem}

Note that for any $(m-1)$-Markov measure $\mu\in\cM_\sigma(S_\beta)$, $h_\mu(\sigma)$ is exactly equal to $\frak{h}_\mu(\beta,m)$ (see Proposition \ref{Markov entropy}).

As an application of the above theorem, in the following we give an exact formula for the Hausdorff dimension of the frequency sets for an important class of $\beta$'s, which are called \textit{pseudo-golden ratios}.

\begin{theorem}\label{pseudo-golden ratios} Let $\beta\in(1,2)$ such that $\epsilon(1,\beta)=1^m0^\infty$ for some integer $m\ge3$.
\newline\emph{(1)} If $0\le a<\frac{1}{m}$, then $F_{\beta,a}=\emptyset$ and $\dim_H F_{\beta,a}=0$.
\newline\emph{(2)} If $\frac{1}{m}\le a\le1$, then
$$\dim_H F_{\beta,a}=\frac{1}{\log\beta}\cdot\max_{x_1,\cdots,x_{m-2}} f_a(x_1,\cdots,x_{m-2})$$
where $f_a(x_1,\cdots,x_{m-2})$
$$\begin{aligned}
=a\log a&-(a-x_1)\log(a-x_1)\\
&-(x_1-x_2)\log(x_1-x_2)\\
&\cdots\\
&-(x_{m-3}-x_{m-2})\log(x_{m-3}-x_{m-2})\\
&-(1-a-x_1-\cdots-x_{m-2})\log(1-a-x_1-\cdots-x_{m-2})\\
&-(x_1+\cdots+x_{m-3}+2x_{m-2}+a-1)\log(x_1+\cdots+x_{m-3}+2x_{m-2}+a-1)
\end{aligned}$$
and the maximum is taken over $x_1,\cdots,x_{m-2}$ such that all terms in the $\log$'s are non-negative. That is, $a\ge x_1\ge x_2\ge\cdots\ge x_{m-2}\ge0$ and $x_1+\cdots+x_{m-3}+x_{m-2}\le 1-a \le x_1+\cdots+x_{m-3}+2x_{m-2}$.

In particular, $\dim_H F_{\beta,\frac{1}{m}}=\dim_H F_{\beta,1}=0$.
\end{theorem}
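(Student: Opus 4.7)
Part (1) is immediate: since $\epsilon(1,\beta)=1^m 0^\infty$, the shift $S_\beta$ is the subshift of finite type of sequences avoiding $1^m$ as a factor. Every length-$m$ window of an admissible sequence therefore contains at least one $0$, so the asymptotic frequency of zeros in any $w\in S_\beta$ is at least $1/m$. Hence $F_a=\emptyset$ whenever $a<1/m$.

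For Part (2), the plan is to combine Theorem \ref{Markov variation formula} with an Abramov-type argument reducing the entropy maximisation to a Shannon-entropy optimisation over a word-frequency distribution. Every $w\in S_\beta$ has infinitely many zeros and decomposes uniquely as a concatenation of blocks from the alphabet $\cW:=\{1^j 0:0\le j\le m-1\}$. For a shift-invariant $\mu$ on $S_\beta$ with $\mu[0]=a$, let $r_j$ be the frequency of the block $1^j 0$ in this decomposition. Kac's lemma applied to the first-return map $T_A$ of $A=[0]$ forces $\sum_{j=0}^{m-1}(j+1)r_j=1/a$ on top of $r_j\ge 0$ and $\sum r_j=1$. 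Abramov's formula gives $h_\mu(\sigma)=a\cdot h_{\mu_A}(T_A)$ with $T_A$ measurably conjugate to the left shift on $\cW^\N$, and Shannon's inequality then yields $h_{\mu_A}(T_A)\le H(r_0,\ldots,r_{m-1})$, with equality precisely when the word process is Bernoulli.

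For the matching lower bound within the $(m-1)$-step Markov class, I would observe that for any valid $r$, the i.i.d.\ word measure $\mu_r$ on $\cW^\N$ pulls back to a shift-invariant measure on $S_\beta$ that is $(m-1)$-step Markov: the state $w_{k-m+2}\cdots w_k$ determines the length $t\le m-1$ of the trailing run of ones, and the conditional distribution $\P(w_{k+1}=1\mid w_{k-m+2}\cdots w_k)=\sum_{\ell>t}r_\ell/\sum_{\ell\ge t}r_\ell$ (with $\P(w_{k+1}=0\mid t=m-1)=1$) depends only on that state. Thus the supremum in Theorem \ref{Markov variation formula} equals $a\cdot\max_r H(r_0,\ldots,r_{m-1})$ over the admissible simplex. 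To match the formula in the statement, set $x_j:=a\sum_{k\ge j}r_k$ for $j=1,\ldots,m-2$; a direct calculation shows $a-x_1=ar_0$, $x_j-x_{j+1}=ar_j$ for $1\le j\le m-3$, $x_1+\cdots+x_{m-3}+2x_{m-2}+a-1=ar_{m-2}$, and $1-a-x_1-\cdots-x_{m-2}=ar_{m-1}$, with the non-negativity constraints on the $x_j$ corresponding exactly to $r_j\ge 0$. Substituting yields $f_a=a\log a-\sum_j(ar_j)\log(ar_j)=a\cdot H(r)$. The endpoint cases force $r_{m-1}=1$ if $a=1/m$ and $r_0=1$ if $a=1$, so $H(r)=0$ in both.

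The main obstacle is verifying that $\mu_r$ is genuinely $(m-1)$-step Markov rather than merely shift-invariant, since Theorem \ref{Markov variation formula} restricts the supremum to Markov measures and, without this upgrade, the Shannon upper bound would not be tight within the allowed class. Fortunately the SFT constraint caps the trailing run of ones at $m-1$, so the past $m-1$ symbols always pin down the phase within the active word (or force the next digit to be zero), making the Markov property automatic; the remaining bookkeeping in the $x_j$-to-$r_j$ translation is then routine.
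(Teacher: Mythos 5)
Your proposal is correct in substance but follows a genuinely different route from the paper. The paper works entirely inside the symbolic system: it starts from the conditional-entropy formula $h_\mu(\sigma)=-\sum\mu[i_1\cdots i_m]\log\bigl(\mu[i_1\cdots i_m]/\mu[i_1\cdots i_{m-1}]\bigr)$ for an $(m-1)$-step Markov measure and repeatedly applies the concavity of $-x\log x$ (Lemma \ref{inequality}) to merge the first index, collapsing the sum over all $m$-cylinders down to the chain $[0],[01],\dots,[01^{m-1}]$; setting $y_k=\mu[01^k]$ then gives $h_\mu(\sigma)\le f_a(y_1,\dots,y_{m-2})$ directly, and the lower bound is obtained by exhibiting the Markov measure for which every merging step is an equality. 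You instead induce on $A=[0]$: Abramov plus Kac plus Shannon's inequality give $h_\mu(\sigma)\le aH(r_0,\dots,r_{m-1})$ subject to $\sum_j(j+1)r_j=1/a$, and your change of variables $x_j=a\sum_{k\ge j}r_k$ correctly identifies $f_a=aH(r)$ with $ar_j=\mu[01^j0]$ (I checked the bookkeeping, including that the constraint set $D_{m,a}$ is exactly the Kac-constrained simplex and that the endpoints $a=1/m,1$ force $r$ to be degenerate). Your verification that the i.i.d.\ word measure is $(m-1)$-step Markov is also sound, since the forbidden word $1^m$ caps the trailing run of ones so that the last $m-1$ symbols determine the phase. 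What your route buys is conceptual clarity: the formula is revealed as an Abramov identity and the domain as a Kac constraint. What the paper's route buys is self-containedness and the fact that it never leaves the class of invariant measures on $S_\beta$. One point you should not wave away: Abramov's and Kac's formulas as you invoke them are statements about ergodic measures, while the supremum in Theorem \ref{Markov variation formula} runs over all invariant (Markov) measures with $\mu[0]=a$, and the constraint $\mu[0]=a$ is not inherited by individual ergodic components. To close this you either need an ergodic decomposition combined with Jensen's inequality for the concave function $q\mapsto\sum_j q_j\log\bigl(\sum_k q_k/q_j\bigr)$ of the vector $q_j=\mu[01^j0]$ (concavity follows from joint concavity of $(x,y)\mapsto x\log(y/x)$), noting also that the linear constraints $\sum_j q_j=a$ and $\sum_j(j+1)q_j=1$ average correctly, or you should replace Kac by the elementary identity $\mu[1^t0]=\sum_{j\ge t}\mu[01^j0]$ obtained from $\sigma$-invariance and telescoping, which yields $\sum_j(j+1)\mu[01^j0]=1$ without any ergodicity hypothesis. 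This is a fillable, standard gap, not a flaw in the strategy.
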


\begin{remark} For the case $m=3$, i.e., $\epsilon(1,\beta)=1110^\infty$, given any $a\in[\frac{1}{3},1]$, by calculating the derivative of $f_a(x_1)$, it is straightforward to get
$$\begin{small}\begin{aligned}
dim_H F_{\beta,a}=\frac{1}{\log\beta}\Big(a\log a&-\frac{10a-3-\sqrt{-8a^2+12a-3}}{6}\log\frac{10a-3-\sqrt{-8a^2+12a-3}}{6}\\
&-\frac{-2a+3-\sqrt{-8a^2+12a-3}}{6}\log\frac{-2a+3-\sqrt{-8a^2+12a-3}}{6}\\
&-\frac{-a+\sqrt{-8a^2+12a-3}}{3}\log\frac{-a+\sqrt{-8a^2+12a-3}}{3}\Big).
\end{aligned}\end{small}$$
In particular, $\dim_H F_{\beta,\frac{1}{3}}=\dim_H F_{\beta,1}=0$.
\end{remark}

We introduce some preliminaries in the next section, and then prove Theorems \ref{dimension equal}, \ref{Markov variational formula} and \ref{pseudo-golden ratios} in Sections 3, 4 and 5 respectively.

\section{Preliminaries}

For $\beta>1$, we define the \textit{$\beta$-transformation} $T_\beta:[0,1]\rightarrow[0,1]$ by
$$T_\beta(x):=\beta x-\lfloor\beta x \rfloor\quad\text{for } x\in[0,1]$$
where $\lfloor y \rfloor$ denotes the greatest integer no larger than $y$. For any $n\in\N$ and $x\in[0,1]$, define
$$\epsilon_n(x,\beta) := \lfloor \beta T^{n-1}_\beta (x)\rfloor.$$
Then we can write
$$x = \sum_{n = 1}^\infty\frac{\epsilon_n(x,\beta)}{\beta^n}$$
and we call the sequence $\epsilon(x,\beta):=\epsilon_1(x,\beta)\epsilon_2(x,\beta)\cdots\epsilon_n(x,\beta)\cdots$ the $\beta$-\textit{expansion} of $x$. Besides, the sequence $\epsilon(x,\beta)$ is said to be infinite if there are infinitely many $n\in\N$ such that $\epsilon_n(x,\beta)\neq 0$. Otherwise, there exists a smallest $m\in \mathbb{N}$ such that for any $j>m, \epsilon_j(x,\beta)=0$ but $\epsilon_m(x,\beta)\neq0$, and we say that $\epsilon(x,\beta)$ is finite with length $m$.

The \textit{quasi-greedy} $\beta$-expansion of $1$ defined by
$$\epsilon^*(1,\beta):=\left\{\begin{array}{ll}
\epsilon(1,\beta) & \mbox{if } \epsilon(1,\beta) \mbox{ is infinite}\\
(\epsilon_1(1,\beta)\cdots\epsilon_{m-1}(1,\beta)(\epsilon_m(1,\beta)-1))^\infty & \mbox{if } \epsilon(1,\beta) \mbox{ is finite with length } m
\end{array}\right.$$
is very useful for checking the admissibility of a sequence (see Lemma \ref{charADM}).

Recall that $\mathcal{A}_\beta$ is the alphabet $\{0,1,\cdots,\lceil\beta\rceil-1\}$ and $d_\beta$ is the usual metric on $\mathcal{A}_\beta^\N$.

\begin{definition}[Admissibility]\label{admissible} Let $\beta>1$. A sequence $w \in \mathcal{A}_\beta^\N$ is called \textit{admissible} if there exists $x \in [0,1)$ such that $\epsilon_i(x,\beta) = w_i$ for all $i \in \N$. We denote the set of all admissible sequences by $\Sigma_\beta$ and its closure in $(\cA_\beta^\N,d_\beta)$ by $S_\beta$. For $n\in\N$, a word $w \in \mathcal{A}_\beta^n$ is called \textit{admissible} if there exists $x \in [0,1)$ such that $\epsilon_i(x,\beta) = w_i$ for all $i\in\{1,\cdots,n\}$. We denote the set of all admissible words with length $n$ by $\Sigma_\beta^n$ and write
$$\Sigma_\beta^*:=\bigcup_{n=1}^\infty\Sigma_\beta^n.$$
\end{definition}

One can verify that $\sigma(S_\beta)=S_\beta$. When we write $\sigma^{-1}$, we consider $\sigma$ restricted to $S_\beta$. So $\sigma^{-1}A\subset S_\beta$ for all $A\subset S_\beta$.

The following criterion due to Parry is well known.

\begin{lemma}[\cite{P60}]\label{charADM}
Let $\beta>1$ and $w$ be a sequence in $\mathcal{A}_\beta^\N$. Then
$$w \in \Sigma_\beta\quad\Longleftrightarrow\quad\sigma^k (w) \prec \epsilon^*(1,\beta) \quad \text{for all } k \ge 0$$
and
$$w \in S_\beta\quad\Longleftrightarrow\quad\sigma^k (w) \preceq \epsilon^*(1,\beta) \quad \text{for all } k \ge 0$$
where $\prec$ and $\preceq$ denote the \textit{lexicographic order} in $\cA_\beta^\N$.
\end{lemma}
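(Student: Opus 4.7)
Part (1) follows directly from the characterization in Lemma~\ref{charADM}: a sequence $w\in S_\beta$ must satisfy $\sigma^k(w)\preceq(1^{m-1}0)^\infty$ for every $k\ge0$, and since $1^m\succ(1^{m-1}0)^\infty$ lexicographically, no element of $S_\beta$ contains $1^m$ as a factor. Consequently every window of $m$ consecutive digits in $\epsilon(x,\beta)$ carries at least one $0$, which forces the lower frequency of zeros to be at least $1/m$; so $F_a=\emptyset$ whenever $a<1/m$.

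For part (2), I would invoke Theorem~\ref{Markov variation formula} and reparameterize the $(m-1)$-step Markov measures via the first-return map to $[0]$. Given any $\sigma$-invariant $\mu$ on $S_\beta$ with $\mu[0]=a>0$, the return time $\tau$ lies in $\{1,\ldots,m\}$ almost surely (no $1^m$ is allowed), and encoding the return block $01^k$ by the letter $k\in\{0,\ldots,m-1\}$ yields a measurable conjugacy between the induced system on $[0]$ and the full shift $(\{0,\ldots,m-1\}^\N,\tilde\sigma)$ equipped with the push-forward $\tilde\mu$ (any concatenation of blocks $01^k$ is admissible). If $p_k:=\mu[01^k0]/a$ is the first-letter marginal of $\tilde\mu$, Kac's lemma gives $\sum_k(k+1)p_k=1/a$ and Abramov's formula gives $h_\mu(\sigma)=a\cdot h_{\tilde\mu}(\tilde\sigma)$.

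For any stationary $\tilde\mu$ with prescribed marginal $(p_k)$ one has $h_{\tilde\mu}(\tilde\sigma)\le-\sum_k p_k\log p_k$, with equality realized by the Bernoulli product measure. A direct computation shows that the resulting $\mu$ on $S_\beta$ is $(m-1)$-step Markov: conditional on the last $m-1$ digits, the length $j\le m-1$ of the terminal run of $1$s is determined, and the next digit is $0$ with probability $p_j/\sum_{i\ge j}p_i$ if $j<m-1$, or almost surely if $j=m-1$. Hence the Bernoulli-coded measures belong to the class of Theorem~\ref{Markov variation formula} and realize the supremum, giving
$$\dim_H F_a=\frac{a}{\log\beta}\sup\Bigl\{-\sum_{k=0}^{m-1}p_k\log p_k:\ p_k\ge0,\ \sum_k p_k=1,\ \sum_k(k+1)p_k=\frac{1}{a}\Bigr\}.$$
In the boundary cases, the constraint $\sum_k(k+1)p_k=1/a$ forces $p_{m-1}=1$ when $a=1/m$ and $p_0=1$ when $a=1$, so the supremum vanishes.

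Finally, to match the stated formula I would set $q_k:=ap_k=\mu[01^k0]$ and $x_j:=\mu[01^j]$ for $j=1,\ldots,m-2$. Elementary identities give $q_0=a-x_1$, $q_k=x_k-x_{k+1}$ for $1\le k\le m-3$, $q_{m-1}=1-a-\sum_{j=1}^{m-2}x_j$ (using $\mu[1]=\sum_{j=1}^{m-1}\mu[01^j]$ and $\mu[01^{m-1}]=q_{m-1}$), and $q_{m-2}=x_{m-2}-q_{m-1}$. Substituting the algebraic identity $-a\sum_k p_k\log p_k=a\log a-\sum_k q_k\log q_k$ into the displayed supremum reproduces $\max f_a(x_1,\ldots,x_{m-2})$ exactly, with the non-negativity of each $q_k$ providing the domain. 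The main technical obstacle I expect is the verification that a Bernoulli-coded measure is indeed $(m-1)$-step Markov (so that restricting to this subfamily does not shrink the supremum in Theorem~\ref{Markov variation formula}), together with the bookkeeping of the change of variables $(p_k)\mapsto(x_1,\ldots,x_{m-2})$.
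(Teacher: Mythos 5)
Your proposal does not address the statement it was assigned. The statement is Lemma \ref{charADM}, Parry's lexicographic characterization of $\Sigma_\beta$ and $S_\beta$ in terms of $\epsilon^*(1,\beta)$; your very first sentence \emph{invokes} that lemma as a known tool, and everything that follows is a proof of Theorem \ref{pseudo-golden ratios} instead. As an argument for the lemma itself this is circular: you nowhere show that admissibility of $w$ (the existence of $x\in[0,1)$ with $\epsilon(x,\beta)=w$) is equivalent to $\sigma^k(w)\prec\epsilon^*(1,\beta)$ for all $k\ge0$, nor that the closure $S_\beta$ is exactly captured by the weak inequality $\sigma^k(w)\preceq\epsilon^*(1,\beta)$; that requires analyzing the greedy algorithm and the conjugacy $\sigma(\epsilon(x,\beta))=\epsilon(T_\beta x,\beta)$, together with a limit argument for the closure. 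Note also that the paper itself offers no proof to compare against: the lemma is stated as a citation to Parry \cite{P60}.

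Judged instead as a blind proof of Theorem \ref{pseudo-golden ratios} (which is what it actually is), your argument is essentially correct and takes a genuinely different route from the paper's. The paper bounds $h_\mu(\sigma)$ over $(m-1)$-step Markov measures directly, repeatedly merging conditional entropies via the convexity inequality of Lemma \ref{inequality}, and then builds an explicit maximizing Markov measure; you induce on the cylinder $[0]$, note that since $1^m$ is forbidden while every concatenation of blocks $01^k$, $0\le k\le m-1$, is admissible, the induced system is the full shift on $m$ return words, and then use Kac's lemma and Abramov's formula to reduce the problem to maximizing the entropy of a stationary process with prescribed one-letter marginal, which the Bernoulli measure achieves. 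Your change of variables $q_k=\mu[01^k0]$, $x_j=\mu[01^j]$, and the identity $-a\sum_k p_k\log p_k=a\log a-\sum_k q_k\log q_k$ do reproduce $f_a$ and the domain $D_{m,a}$, and your check that the Bernoulli-coded measure is $(m-1)$-step Markov (the last $m-1$ digits determine the terminal run of $1$'s) legitimately places it in the class covered by Theorem \ref{Markov variation formula}. Two details should be pinned down: Kac and Abramov as you use them require ergodicity (harmless, since the supremum may be restricted to ergodic measures and the Bernoulli lifts are ergodic), and the correspondence between invariant measures on $S_\beta$ with $\mu[0]=a>0$ and stationary measures on the coded full shift with mean block length $1/a$ deserves an explicit statement. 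None of this, however, supplies a proof of Lemma \ref{charADM}, which remains unproven in your write-up.
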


We prove the following useful proposition.

\begin{proposition}\label{every m}
Let $\beta>1$ such that $\epsilon(1,\beta)=\epsilon_1(1,\beta)\cdots\epsilon_m(1,\beta)0^m$ for some integer $m\ge2$ with $\epsilon_m(1,\beta)\neq0$ and $w_1\cdots w_n\in\mathcal{A}_\beta^n$ for some integer $n\ge m$, then
$$w_1\cdots w_n\in\Sigma_\beta^*\quad\text{if and only if}\quad w_1\cdots w_m,w_2\cdots w_{m+1},\cdots,w_{n-m+1}\cdots w_n\in\Sigma_\beta^*.$$
\end{proposition}
\begin{proof} \boxed{\Rightarrow} Obvious.
\newline\boxed{\Leftarrow} For simplification we use $\epsilon_1,\cdots,\epsilon_m$ instead of $\epsilon_1(1,\beta),\cdots,\epsilon_m(1,\beta)$ in the following. Suppose
$$w_1\cdots w_m,w_2\cdots w_{m+1},\cdots,w_{n-m+1}\cdots w_n\in\Sigma_\beta^*.$$
By Lemma \ref{charADM} we get
$$w_1\cdots w_m,w_2\cdots w_{m+1},\cdots,w_{n-m+1}\cdots w_n\preceq\epsilon_1\cdots\epsilon_{m-1}(\epsilon_m-1).$$
In order to get $w_1\cdots w_n\in\Sigma_\beta^*$, by Lemma \ref{charADM}, it suffices to check
$$\sigma^k(w_1\cdots w_n0^\infty)\prec(\epsilon_1\cdots\epsilon_{m-1}(\epsilon_m-1))^\infty\quad\text{for all }k\ge0.$$
If $k\ge n$, this is obvious. We consider $k\le n-1$ in the following. Let $l\ge0$ be the greatest integer such that $k+lm\le n-1$. Then
$$\begin{aligned}
&\sigma^k(w_1\cdots w_n0^\infty)\\
=&(w_{k+1}\cdots w_{k+m})(w_{k+m+1}\cdots w_{k+2m})\cdots(w_{k+(l-1)m+1}\cdots w_{k+lm})(w_{k+lm+1}\cdots w_n0^{k+(l+1)m-n})0^\infty\\
\preceq&(\epsilon_1\cdots\epsilon_{m-1}(\epsilon_m-1))^l(w_{k+lm+1}\cdots w_n0^{k+(l+1)m-n})0^\infty\\
\prec&(\epsilon_1\cdots\epsilon_{m-1}(\epsilon_m-1))^\infty,
\end{aligned}$$
where the last inequality follows from
\begin{eqnarray}\label{tail less}
w_{k+lm+1}\cdots w_n0^{k+(l+1)m-n}\preceq\epsilon_1\cdots\epsilon_{m-1}(\epsilon_m-1),
\end{eqnarray}
which can be proved as follows. In fact, by $w_{n-m+1}\cdots w_n\in\Sigma_\beta^*$ and Lemma \ref{charADM}, we get
$$\sigma^{k+(l+1)m-n}(w_{n-m+1}\cdots w_n0^\infty)\prec(\epsilon_1\cdots\epsilon_{m-1}(\epsilon_m-1))^\infty.$$
This implies (\ref{tail less}).
\end{proof}

In this paper, we use the following definitions of cylinders, noting that in some literature $[w]$ denotes the cylinder in $\Sigma_\beta$, not in $S_\beta$.

\begin{definition}[Cylinder]\label{Cylinder} Let $\beta>1$. For an admissible word $w\in\Sigma_\beta^*$ with length $n\in\N$, the \textit{cylinder} in $S_\beta$ of order $n$ generated by $w$ is defined by
$$[w]:=\{v\in S_\beta:v_i=w_i\text{ for }1\le i\le n\},$$
and the \textit{cylinder} in $[0,1)$ of order $n$ generated by $w$ is defined by
$$I(w):=\{x\in[0,1):\epsilon_i(x,\beta)=w_i\text{ for }1\le i\le n\}.$$
\end{definition}

The following covering property, which plays a crucial role in the proof of Theorem \ref{dimension equal}, is deduced from the length and distribution of full cylinders (see \cite{BW14,FW12,LL18} for definition and more details).

\begin{proposition}(\cite[Proposition 4.1]{BW14})\label{coveringproperties}
Let $\beta>1$. For any $x\in[0,1)$ and $n\in\N$, the interval $[x-\frac{1}{\beta^n},x+\frac{1}{\beta^n}]$ intersected with $[0,1)$ can be covered by at most $4(n+1)$ cylinders of order $n$.
\end{proposition}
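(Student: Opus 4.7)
The strategy I would follow is to combine a length/geometry argument with a structural fact about the distribution of non-full cylinders in the natural order on $[0,1)$. Recall that the cylinders of order $n$ partition $[0,1)$ into finitely many half-open intervals of length at most $\beta^{-n}$, and those of length exactly $\beta^{-n}$ are called \emph{full}. Listing the cylinders that meet $B(x,\beta^{-n})\cap[0,1)$ in increasing order as $J_1,\dots,J_K$, the goal is to bound $K$ by $4(n+1)$.

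\emph{Geometric constraint on full cylinders.} The intervals $J_1,\dots,J_K$ are pairwise disjoint and tile $[l,r)$ with $l:=\inf J_1$ and $r:=\sup J_K$. Because each of $J_1$ and $J_K$ meets $B(x,\beta^{-n})$ while having length at most $\beta^{-n}$, the interval $[l,r)$ extends beyond the endpoints of $B(x,\beta^{-n})$ by strictly less than $\beta^{-n}$ on each side, giving
\begin{equation*}
r-l<2\beta^{-n}+2\beta^{-n}=4\beta^{-n}.
\end{equation*}
Since each full $J_i$ contributes exactly $\beta^{-n}$ to $\sum_i|J_i|=r-l$, the number $F$ of full cylinders among $J_1,\dots,J_K$ satisfies $F\le 3$.

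\emph{Density of full cylinders.} I would next establish that any $n+1$ consecutive cylinders of order $n$ contain at least one full cylinder. The key input is Parry's admissibility criterion (Lemma~\ref{charADM}): the cylinder $I_n(w_1\cdots w_n)$ is non-full precisely when some suffix $w_{k+1}\cdots w_n$ of $w$ coincides with the corresponding prefix of $\epsilon^*(1,\beta)$. A careful combinatorial bookkeeping shows that as one walks through admissible length-$n$ words in lexicographic order (which matches left-to-right position on $[0,1)$), such suffix-coincidences can persist for at most $n$ consecutive steps before being broken, producing a full cylinder. This is the length-and-distribution analysis of full cylinders carried out in \cite{BW14} and refined in \cite{FW12,LL18}.

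\emph{Conclusion and main obstacle.} Combining the two parts, among $J_1,\dots,J_K$ the $F\le 3$ full cylinders split the remaining non-full ones into at most $F+1$ consecutive runs, each of length at most $n$. Hence
\begin{equation*}
K\le F+(F+1)n\le 3+4n\le 4(n+1),
\end{equation*}
as required. The serious work is the structural density lemma; the geometric counting above is then elementary. In a full write-up I would either cite \cite[Proposition~4.1]{BW14} directly or reproduce the combinatorial argument governing the lexicographic dynamics of the $\beta$-shift, since no finer information about $\epsilon^*(1,\beta)$ is available at this level of generality.
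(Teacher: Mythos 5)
Your proposal is correct and is essentially the argument behind the cited result: the paper itself gives no proof at all, quoting \cite[Proposition 4.1]{BW14} verbatim, and your two-step reduction --- at most $F\le 3$ full cylinders (each of length exactly $\beta^{-n}$) can meet a ball of radius $\beta^{-n}$, while every $n+1$ consecutive cylinders of order $n$ contain a full one (the distribution theorem of \cite{BW14}, which you rightly identify as the only serious input and defer to the literature), whence $K\le F+(F+1)n\le 4n+3\le 4(n+1)$ --- is precisely how Bugeaud and Wang deduce their Proposition 4.1. One aside in your sketch is misstated, though it carries no load in your counting: non-fullness of $I_n(w_1\cdots w_n)$ corresponds to a suffix of $w$ matching a prefix of $\epsilon(1,\beta)$, not of the modified expansion $\epsilon^*(1,\beta)$ --- for the golden ratio, $w=10$ has the suffix $10$ matching a prefix of $\epsilon^*(1,\beta)=(10)^\infty$, yet $I_2(10)=[\beta^{-1},1)$ has length exactly $\beta^{-2}$ and is full.
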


\begin{definition}[Hausdorff measure and dimension in metric space]\label{dim in metric space} Let $(X,d)$ be a metric space. For any $U\subset X$, denote the diameter of $U$ by $|U|:=\sup_{x,y\in U}d(x,y)$. For any $A\subset X, s\ge0$ and $\delta>0$, let
$$\cH^s_\delta(A,d):=\inf\Big\{\sum_{i=1}^\infty|U_i|^s:A\subset\bigcup_{i=1}^\infty U_i\text{ and }|U_i|\le\delta\text{ for all }i\in\N\Big\}.$$
We define the \textit{$s$-dimensional Hausdorff measure} of $A$ in $(X,d)$ by
$$\cH^s(A,d):=\lim_{\delta\to0}\cH^s_\delta(A,d)$$
and the \textit{Hausdorff dimension} of $A$ in $(X,d)$ by
$$\dim_H(A,d):=\sup\{s\ge0:\cH^s(A,d)=\infty\}.$$
In the space of real numbers $\R$ (equipped with the usual metric), we use $\cH^s(A)$ and $\dim_H A$ to denote the $s$-dimensional Hausdorff measure and the Hausdorff dimension of $A$ respectively for simplification (see \cite{F90}).
\end{definition}

\begin{definition}[Lipschitz continuous]
Let $(X,d)$ and $(X',d')$ be two metric spaces. A map $f:X\to X'$ is called \textit{Lipschitz continuous} if there exists a constant $c>0$ such that
$$d'(f(x),f(y))\le c\cdot d(x,y)\quad\text{for all }x,y\in X.$$
\end{definition}

The following basic proposition can be deduced directly from the definitions.

\begin{proposition}\label{lip}
If the map $f:(X,d)\to(X',d')$ between two metric spaces is Lipschitz continuous, then for any $A\subset X$, we have
$$\dim_H(f(A),d')\le\dim_H(A,d).$$
\end{proposition}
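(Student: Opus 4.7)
The plan is to invoke Theorem~\ref{Markov variation formula} and parameterize the $(m-1)$-step Markov measures on $S_\beta$ by their ``tag statistics''. First, since $\epsilon(1,\beta)=1^m0^\infty$ is finite with $\epsilon_m(1,\beta)=1$, the modified expansion is $\epsilon^*(1,\beta)=(1^{m-1}0)^\infty$, and Lemma~\ref{charADM} identifies $S_\beta$ as the subshift of finite type over $\{0,1\}$ forbidding the block $1^m$. For part~(1), every sequence in $S_\beta$ contains at least one $0$ in each window of $m$ consecutive positions, so its lower density of $0$'s is at least $1/m$; therefore $F_a=\emptyset$ whenever $a<1/m$, and $\dim_H F_a=0$.

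For part~(2), define the \emph{tag} of a position to be the number of consecutive $1$'s ending there; on $S_\beta$ this takes values in $\{0,1,\dots,m-1\}$. For a stationary $\mu$ on $S_\beta$, set $p_0:=\mu[0]=a$ and $p_k:=\mu[01^k]$ for $1\le k\le m-1$. Because the tag is exhaustive, $\sum_{k=0}^{m-1}p_k=1$, and for $0\le k\le m-2$ one has $p_k-p_{k+1}=\mu[01^k0]$. Writing $x_k=p_k$ for $1\le k\le m-2$, the arguments of the logarithms appearing in $f_a(x_1,\dots,x_{m-2})$ are exactly $p_0$, $\mu[00]$, $\mu[010],\dots,\mu[01^{m-3}0]$, $p_{m-1}$, and $\mu[01^{m-2}0]$, and the non-negativity constraints in the theorem are exactly $p_0\ge p_1\ge\cdots\ge p_{m-1}\ge 0$.

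For the upper bound, let $\mu$ be an $(m-1)$-step Markov measure with $\mu[0]=a$, and let $T(X_1,\dots,X_{m-1})$ denote the tag of that window. Since $T$ is a function of $X_1,\dots,X_{m-1}$, the data-processing inequality gives
$$h_\mu(\sigma)=H_\mu(X_m\mid X_1,\dots,X_{m-1})\le H_\mu(X_m\mid T(X_1,\dots,X_{m-1})).$$
A direct expansion of the right-hand side in terms of $p_k$ and $p_k-p_{k+1}$, using $\mu[1^m]=0$ to handle the tag $m-1$ case and the telescoping identity $\sum_{k=0}^{m-2}(p_k\log p_k-p_{k+1}\log p_{k+1})=p_0\log p_0-p_{m-1}\log p_{m-1}$, yields exactly $f_a(x_1,\dots,x_{m-2})$; hence $h_\mu(\sigma)\le f_a(x_1,\dots,x_{m-2})$.

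For the matching lower bound, given any $(x_1,\dots,x_{m-2})$ making the $f_a$-arguments non-negative, construct the $1$-step Markov chain on the tag alphabet $\{0,\dots,m-1\}$ with stationary distribution $(p_0,\dots,p_{m-1})$ and transitions: from tag $k<m-1$ go to tag $0$ with probability $(p_k-p_{k+1})/p_k$ and to tag $k+1$ with probability $p_{k+1}/p_k$; from tag $m-1$ go deterministically to tag $0$. Because the tag of a length-$(m-1)$ window is a function of that window, this chain lifts to a valid $(m-1)$-step Markov measure on $S_\beta$ with $\mu[0]=a$, and the standard Markov entropy formula evaluates to $f_a(x_1,\dots,x_{m-2})$. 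Combining the two bounds with Theorems~\ref{dimension equal} and~\ref{Markov variation formula} gives $\dim_H F_a=\frac{1}{\log\beta}\max f_a$; the endpoints $a=1/m$ and $a=1$ force $\mu$ to be supported on the orbits of $(01^{m-1})^\infty$ and $0^\infty$ respectively, both of entropy $0$, so $\dim_H F_{1/m}=\dim_H F_1=0$. The main obstacle will be the bookkeeping in the conditional-entropy computation: one must expand $H_\mu(X_m\mid T)$ cleanly into the cylinder frequencies $\mu[01^k0]$ and $\mu[01^{m-1}]$, track the forced transition from tag $m-1$, and verify that the telescoping collapses the intermediate $p_k\log p_k$ contributions so as to reproduce $f_a$ term by term.
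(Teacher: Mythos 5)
Your proposal does not address the statement at all: what you have written is a sketch of the proof of Theorem~\ref{pseudo-golden ratios} (the dimension formula for frequency sets when $\epsilon(1,\beta)=1^m0^\infty$), complete with tag statistics, the lifting of a Markov chain on tags to an $(m-1)$-step Markov measure, and the function $f_a$. The statement to be proved, Proposition~\ref{lip}, is the elementary and purely metric fact that a Lipschitz map between arbitrary metric spaces cannot increase Hausdorff dimension. None of the machinery you invoke (Theorem~\ref{Markov variation formula}, the subshift structure of $S_\beta$, entropy of Markov measures) is relevant to it; worse, the route is circular in spirit, since Proposition~\ref{lip} is itself an ingredient of the chain of results you are invoking --- it is used, via the Lipschitz continuity of $\pi_\beta$, in the proof of Theorem~\ref{dimension equal}, on which Theorem~\ref{Markov variation formula} and Theorem~\ref{pseudo-golden ratios} depend.

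What is actually needed is a short covering argument directly from Definition~\ref{dim in metric space}, which is all the paper intends when it says the proposition ``can be deduced directly from the definitions''. Let $c>0$ be a Lipschitz constant for $f$ and let $s\ge0$. If $\{U_i\}$ is a $\delta$-cover of $A$, then $\{f(U_i\cap A)\}$ covers $f(A)$ and $|f(U_i\cap A)|\le c\,|U_i|\le c\delta$, so
$$\cH^s_{c\delta}(f(A),d')\le\sum_i|f(U_i\cap A)|^s\le c^s\sum_i|U_i|^s;$$
taking the infimum over $\delta$-covers and letting $\delta\to0$ gives $\cH^s(f(A),d')\le c^s\,\cH^s(A,d)$. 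Now if $s>\dim_H(A,d)$, then $\cH^s(A,d)<\infty$ by the definition of $\dim_H$ as a supremum, hence $\cH^s(f(A),d')<\infty$, so $s$ does not belong to $\{t\ge0:\cH^t(f(A),d')=\infty\}$. Since this holds for every $s>\dim_H(A,d)$, that set is contained in $[0,\dim_H(A,d)]$ and therefore $\dim_H(f(A),d')\le\dim_H(A,d)$. You should supply an argument of this kind; the material you wrote, whatever its merits as a proof of Theorem~\ref{pseudo-golden ratios}, leaves the actual statement entirely unproved.
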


Recall that $\cM_\sigma(S_\beta)$ is the set of $\sigma$-invariant Borel probability measures on $S_\beta$. The following is a consequence of Carath\'eodory's measure extension theorem and the fact that for verifying the $\sigma$-invariance of measures on $S_\beta$, one only needs to check it for the cylinders.

\begin{proposition}\label{extend} Let $\beta\in(1,2]$. Any set function $\mu$ from $\{[w]:w\in\Sigma_\beta^*\}$ to $[0,1]$ satisfying
$$\mu[0]+\mu[1]=1,\quad\sum_{\substack{v=0,1\\wv\in\Sigma_\beta^*}}\mu[wv]=\mu[w]\quad\text{and}\quad\sum_{\substack{u=0,1\\uw\in\Sigma_\beta^*}}\mu[uw]=\mu[w]$$
for all $w\in\Sigma_\beta^*$ can be uniquely extended to be a measure in $\cM_\sigma(S_\beta)$.
\end{proposition}

The following concept is well known (see for examples \cite[Section 2]{FFW01} and \cite[Section 6.2]{K97}).

\begin{definition}[$k$-Markov measure]\label{k-Markov measure} Let $\beta\in(1,2]$, $k\in\N$ and $\mu\in\cM_\sigma(S_\beta)$. We call $\mu$ a \textit{$k$-Markov measure} if
$$\mu[w_1\cdots w_n]=\mu[w_1\cdots w_{n-1}]\cdot\frac{\mu[w_{n-k}\cdots w_n]}{\mu[w_{n-k}\cdots w_{n-1}]}$$
for all $w_1\cdots w_n\in\Sigma_\beta^n$ with $n>k$.
\end{definition}

Recall that $h_\mu(\sigma)$ is the measure-theoretic entropy of $\sigma$ with respect to the measure $\mu$. Using $\cP:=\{[0],[1]\}$ as a partition generator of the Borel sigma-algebra on $S_\beta$, the proof of the following proposition is straightforward.

\begin{proposition}\label{Markov entropy} Let $\beta\in(1,2]$, $k\in\N$ and $\mu\in\cM_\sigma(S_\beta)$ be a $k$-Markov measure, then
$$h_\mu(\sigma)=-\sum_{w_1\cdots w_{k+1}\in\Sigma_\beta^{k+1}}\mu[w_1\cdots w_{k+1}]\log\frac{\mu[w_1\cdots w_{k+1}]}{\mu[w_1\cdots w_k]}.$$
\end{proposition}

\section{Proof of Theorem \ref{dimension equal}}

The main we need to prove is the following technical lemma.

\begin{lemma}\label{measurescompare}
Let $\beta>1$, $s>0$ and $Z\subset S_\beta$. Then for any $\epsilon\in(0,s)$, we have
$$\cH^s(Z,d_\beta)\le\cH^{s-\epsilon}(\pi_\beta(Z)).$$
\end{lemma}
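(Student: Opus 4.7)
The plan is to transfer any cover of $\pi_\beta(Z)$ in $[0,1]$ to a cover of $Z$ in the metric space $(S_\beta, d_\beta)$, using Proposition \ref{coveringproperties} as the crucial bridge. The difficulty that the lemma really confronts is that $\pi_\beta$ is Lipschitz but $\pi_\beta^{-1}$ is not (not even continuous), so a cover on the real-line side does not pull back to a cover of comparable cardinality on the symbolic side; Proposition \ref{coveringproperties} supplies exactly the controlled blow-up that will compensate for this failure, and the small loss $\epsilon$ in the exponent will absorb the resulting overhead.

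Concretely, I would fix $\delta > 0$ and take any cover $\{U_i\}_{i\ge 1}$ of $\pi_\beta(Z)$ in $[0,1]$ with $|U_i| \le \delta$, discarding empty and (at most countably many) singleton $U_i$, which contribute $0$ to any positive-dimensional Hausdorff measure. For each remaining $i$, choose the unique integer $n_i \ge 0$ with $\beta^{-(n_i+1)} < |U_i| \le \beta^{-n_i}$ and a point $x_i \in U_i \cap [0,1)$; then $U_i \cap [0,1) \subset B(x_i, \beta^{-n_i})$, so Proposition \ref{coveringproperties} covers $U_i \cap [0,1)$ by at most $4(n_i+1)$ real-line cylinders of order $n_i$. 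Each such real-line cylinder $I_{n_i}(y)$ lifts to the symbolic cylinder $\{v \in S_\beta : v_k = \epsilon_k(y,\beta),\, 1 \le k \le n_i\}$, whose $d_\beta$-diameter is exactly $\beta^{-n_i}$. If $w \in Z \cap \Sigma_\beta$ and $\pi_\beta(w) \in U_i$, then $w = \epsilon(\pi_\beta(w),\beta)$ by admissibility, hence $w$ lies in one of these symbolic cylinders. The residual set $Z \setminus \Sigma_\beta$ is countable---by Lemma \ref{charADM}, its elements are exactly those $w \in S_\beta$ with $\sigma^k w = \epsilon^*(1,\beta)$ for some $k \ge 0$, and for each $k$ there are only finitely many admissible prefixes---so it has zero Hausdorff $s$-measure and may be absorbed.

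The remaining task is the numerical estimate. Writing $s = \epsilon + (s-\epsilon)$ and using $\beta^{-n_i} < \beta\,|U_i|$, I obtain
\[
 4(n_i+1)\beta^{-n_i s} \;=\; \bigl(4(n_i+1)\beta^{-n_i \epsilon}\bigr)\,\beta^{-n_i(s-\epsilon)} \;\le\; M\,\beta^{s-\epsilon}\,|U_i|^{s-\epsilon},
\]
where $M := \sup_{n \ge 0} 4(n+1)\beta^{-n\epsilon} < \infty$ because $\beta^\epsilon > 1$ makes the polynomial--exponential product bounded. Summing over $i$ and taking the infimum over covers produces $\cH^s_\delta(Z,d_\beta) \le M\beta^{s-\epsilon}\,\cH^{s-\epsilon}_{\beta\delta}(\pi_\beta(Z))$, and letting $\delta \to 0$ yields the weak form $\cH^s(Z,d_\beta) \le C\,\cH^{s-\epsilon}(\pi_\beta(Z))$ with a constant $C = C(\beta,s,\epsilon)$.

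To obtain the stated inequality without the constant, I would bootstrap on $\epsilon$: if $\cH^{s-\epsilon}(\pi_\beta(Z)) = \infty$ the conclusion is trivial, while if $\cH^{s-\epsilon}(\pi_\beta(Z)) < \infty$ then $\dim_H \pi_\beta(Z) \le s-\epsilon$, and applying the weak form with $\epsilon/2$ in place of $\epsilon$ forces $\cH^s(Z,d_\beta) \le C(\beta,s,\epsilon/2)\,\cH^{s-\epsilon/2}(\pi_\beta(Z)) = 0 \le \cH^{s-\epsilon}(\pi_\beta(Z))$. The main obstacle is really this exponential-versus-polynomial trade-off: Proposition \ref{coveringproperties} introduces a linear overhead $4(n+1)$ in cardinality at each scale, and the role of the arbitrarily small $\epsilon > 0$ is precisely to absorb this overhead into the slack $\beta^{-n\epsilon}$; a clean handling of the non-admissible boundary $S_\beta \setminus \Sigma_\beta$ is the only other technical point.
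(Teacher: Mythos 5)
Your proof is correct and follows essentially the same route as the paper's: transfer a cover $\{U_i\}$ of $\pi_\beta(Z)$ to symbolic cylinders via Proposition \ref{coveringproperties}, choose $n_i$ with $\beta^{-(n_i+1)}<|U_i|\le\beta^{-n_i}$, and let the $\epsilon$-loss in the exponent absorb the linear overhead $4(n_i+1)$. The only difference is the endgame: the paper picks $\delta_0$ small enough that $8n_i\beta^s\le\beta^{(n_i+1)\epsilon}$ holds for every scale that can occur, so no multiplicative constant ever appears, whereas you carry a constant $C(\beta,s,\epsilon)$ and then remove it by the (valid) bootstrap with $\epsilon/2$; both work.
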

\begin{proof} Fix $\epsilon\in(0,s)$. Let $Z_0:=Z\cap\Sigma_\beta$. Since $S_\beta\setminus\Sigma_\beta$ is countable, we only need to prove $\cH^s(Z_0,d_\beta)\le\cH^{s-\epsilon}(\pi_\beta(Z_0))$.
\newline(1) Choose $\delta_0\in(0,\frac{1}{\beta})$ small enough as follows. Since $\beta^{(n+1)\epsilon}\to\infty$ much faster than $8\beta^sn\to\infty$ as $n\to\infty$, there exists $n_0\in\N$ such that for any $n>n_0$, $8\beta^s n\le\beta^{(n+1)\epsilon}$. By $\frac{-\log\delta}{\log\beta}-1\to\infty$ as $\delta\to0^+$, there exists $\delta_0\in(0,\frac{1}{\beta})$ small enough such that $\frac{-\log\delta_0}{\log\beta}-1>n_0$. Then for any $n>\frac{-\log\delta_0}{\log\beta}-1$, we will have $8\beta^sn\le\beta^{(n+1)\epsilon}$.
\newline(2) For any $\delta\in(0,\delta_0)$, let $\{U_i\}$ be a $\delta$-cover of $\pi_\beta(Z_0)$, i.e., $0<|U_i|\le\delta$ and $\pi_\beta(Z_0)\subset\cup_i U_i$. Then for each $U_i$, there exists $n_i\in\N$ such that $\frac{1}{\beta^{n_i+1}}<|U_i|\le\frac{1}{\beta^{n_i}}$. By Proposition \ref{coveringproperties}, $U_i$ can be covered by at most $8n_i$ cylinders $I_{i,1}, I_{i,2}, \cdots, I_{i,8n_i}$ of order $n_i$. It follows from
$$|\Sigma_\beta\cap\pi_\beta^{-1}I_{i,j}|=\frac{1}{\beta^{n_i}}<\beta|U_i|\le\beta\delta\quad\text{and}\quad Z_0\subset\Sigma_\beta\cap\bigcup_i\pi_\beta^{-1}U_i\subset\bigcup_i\bigcup_{j=1}^{8n_i}(\Sigma_\beta\cap\pi_\beta^{-1}I_{i,j})$$
that
\begin{equation}\label{upper-z0}
\cH^s_{\beta\delta}(Z_0,d_\beta)\le\sum_i\sum_{j=1}^{8n_i}|\Sigma_\beta\cap\pi_\beta^{-1}I_{i,j}|^s=\sum_i\frac{8n_i}{\beta^{n_is}}\overset{(\star)}{\le}\sum_i\frac{1}{\beta^{(n_i+1)(s-\epsilon)}}<\sum_i|U_i|^{s-\epsilon},
\end{equation}
where ($\star$) is because $\frac{1}{\beta^{n_i+1}}<|U_i|<\delta_0$ implies $n_i>\frac{-\log\delta_0}{\log\beta}-1$, and then by (1) we have $8n_i\beta^s\le\beta^{(n_i+1)\epsilon}$. Taking $\inf$ on the right of (\ref{upper-z0}), we get $\cH^s_{\beta\delta}(Z_0,d_\beta)\le\cH^{s-\epsilon}_\delta(\pi_\beta(Z_0))$. It follows from letting $\delta\to0$ that $\cH^s(Z_0,d_\beta)\le\cH^{s-\epsilon}(\pi_\beta(Z_0))$.
\end{proof}

\begin{proof}[Proof of Theorem \ref{dimension equal}]
The inequality $\dim_H(Z,d_\beta)\ge\dim_H\pi_\beta(Z)$ follows from Proposition \ref{lip} and the fact that $\pi_\beta$ is Lipschitz continuous. The inverse inequality follows from Lemma \ref{measurescompare}. In fact, for any $t<\dim_H(Z,d_\beta)$, there exists $s$ such that $t<s<\dim_H(Z,d_\beta)$. By $\cH^s(Z,d_\beta)=\infty$ and Lemma \ref{measurescompare}, we get $\cH^t(\pi_\beta(Z))=\infty$. Thus $t\le\dim_H\pi_\beta(Z)$. It means that $\dim_H(Z,d_\beta)\le\dim_H\pi_\beta(Z)$.
\end{proof}

\section{Proof of Theorem \ref{Markov variational formula}}

We will deduce Theorem \ref{Markov variational formula} from the following proposition, which is essentially from \cite{PS05}.

\begin{proposition}\label{variational formula} Let $\beta>1$ and $a\in[0,1]$. Then
$$\dim_HF_{\beta,a}=\frac{1}{\log\beta}\cdot\sup\Big\{h_{\mu}(\sigma):\mu\in\cM_\sigma(S_\beta),\mu[0]=a\Big\}.$$
\end{proposition}

For the convenience of the readers, we recall some definitions and show how Proposition \ref{variational formula} comes from \cite{PS05}.

\begin{definition}\label{psDef} Let $\beta>1$.
\item(1) For any $w\in S_\beta$ and $n\in\N$, the \textit{empirical measure} is defined by
$$\cE_n(w):=\frac{1}{n}\sum_{i=0}^{n-1}\delta_{\sigma^iw}$$
where $\delta_w$ is the Dirac probability measure concentrated on $w$.
\item(2) Let $\cA$ be an arbitrary non-empty parameter set and let
$$\cF:=\Big\{(f_\alpha,c_\alpha,d_\alpha):\alpha\in\cA\Big\}$$
where $f_\alpha:S_\beta\to\R$ is continuous and $c_\alpha,d_\alpha\in\R$ with $c_\alpha\le d_\alpha$ for all $\alpha\in\cA$. Define
$$S_{\beta,\cF}:=\Big\{w\in S_\beta:\forall\alpha\in\cA,c_\alpha\le\varliminf_{n\to\infty}\int f_\alpha\text{ } d\cE_n(w)\le\varlimsup_{n\to\infty}\int f_\alpha\text{ }d\cE_n(w)\le d_\alpha\Big\}$$
and
$$\cM_{\beta,\cF}:=\Big\{\mu\in\cM_\sigma(S_\beta):\forall\alpha\in\cA, c_\alpha\le\int f_\alpha\text{ } d\mu\le d_\alpha\Big\}.$$
\end{definition}

Combining Theorems 5.2 and 5.3 in \cite{PS05}, we get the following.

\begin{lemma}\label{psThm}
Let $\beta>1$. If $\cM_{\beta,\cF}$ is a non-empty closed connected set, then
$$h_{top}(S_{\beta,\cF},\sigma)=\sup\Big\{h_\mu(\sigma):\mu\in\cM_{\beta,\cF}\Big\}$$
where $h_{top}(S_{\beta,\cF},\sigma)$ is the topological entropy of $S_{\beta,\cF}$ in the dynamical system $(S_\beta,d_\beta,\sigma)$. (See \cite{B73} for the definition of the \textit{topological entropy} for non-compact sets.)
\end{lemma}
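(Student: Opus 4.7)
The plan is to invoke the variational framework of Pfister and Sullivan in \cite{PS05}. Their general setting consists of a topological dynamical system satisfying a weak specification property (the \emph{$g$-almost product property}) together with subsets defined by constraints on the weak-$*$ accumulation points of the empirical measures $\cE_n(w)$; their Theorems~5.2 and~5.3 then produce matching upper and lower bounds for the Bowen-type topological entropy of the resulting level set in terms of the measure-theoretic entropies in the corresponding class of invariant measures. The lemma we want is exactly the combination of those two bounds, applied with the concrete data $(S_\beta,\sigma,\cF)$ supplied by Definition~\ref{psDef}.

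First I would verify that the hypotheses of \cite[Theorems~5.2 and~5.3]{PS05} are met in our situation. The only nontrivial one is the $g$-almost product property of the base system, and this is established for the full $\beta$-shift $(S_\beta,\sigma)$ for every $\beta>1$ inside the same paper \cite{PS05}, so this step reduces to a direct citation. The remaining ingredients---continuity of the observables $f_\alpha$ together with the hypothesis that $\cM_{\beta,\cF}$ is non-empty, closed, and connected---are built into the statement of our lemma.

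The upper bound $h_{top}(S_{\beta,\cF},\sigma)\le\sup\{h_\mu(\sigma):\mu\in\cM_{\beta,\cF}\}$ then follows from \cite[Theorem~5.2]{PS05} by an entropy-distribution argument: each $w\in S_{\beta,\cF}$ has all weak-$*$ accumulation points of $(\cE_n(w))_{n\ge1}$ inside $\cM_{\beta,\cF}$, and a Bowen covering of any such set is controlled by the maximal measure-theoretic entropy of the limit class. The reverse inequality is supplied by \cite[Theorem~5.3]{PS05}: for ergodic $\mu\in\cM_{\beta,\cF}$ the $g$-almost product property is used to glue approximately $\mu$-generic orbits into $(n,\eps)$-separated subsets of $S_{\beta,\cF}$ of cardinality $\exp(n(h_\mu(\sigma)-o(1)))$, and the closed-connected hypothesis on $\cM_{\beta,\cF}$ allows one to extend from ergodic $\mu$ to arbitrary $\mu$ by approximating the latter by convex combinations of ergodic measures in $\cM_{\beta,\cF}$ and gluing the corresponding generic pieces.

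The main obstacle, should one wish to reprove the lemma from first principles rather than quote \cite{PS05} wholesale, is establishing the $g$-almost product property of the $\beta$-shift: this is the structural fact about $(S_\beta,\sigma)$ that replaces classical specification (which fails in general when $\epsilon(1,\beta)$ is aperiodic) and that drives both the covering argument for the upper bound and the orbit-concatenation construction for the lower bound. Everything downstream of that is a routine implementation of the Pfister--Sullivan templates.
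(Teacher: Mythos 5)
Your proposal is correct and takes essentially the same route as the paper: the paper also obtains this lemma purely by combining Theorems~5.2 and~5.3 of \cite{PS05} applied to the $\beta$-shift, offering no further argument. Your extra remarks on the $g$-almost product property and on where the closed--connected hypothesis enters simply make explicit why those two theorems are applicable here.
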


For $\beta>1$ and $a\in[0,1]$, let
$$S_{\beta,a}:=\Big\{w\in S_\beta:\lim_{n\to\infty}\frac{\#\{1\le k\le n:w_k=0\}}{n}=a\Big\}.$$
In Definition \ref{psDef} (2), let $\cF$ be the singleton $\{(\mathbbm{1}_{[0]},a,a)\}$, where the characteristic function $\mathbbm{1}_{[0]}:S_\beta\to\R$ is continuous. (Here we note that another characteristic function $\mathbbm{1}_{[0,\frac{1}{\beta}]}:[0,1]\to\R$ is not continuous, which means that some other similar variational formulae corresponding to dynamical systems on [0,1] can not be applied directly in our case.) We get the following lemma as a special case of the above one.

\begin{lemma}\label{topS}
$$h_{top}(S_{\beta,a},\sigma)=\sup\Big\{h_\mu(\sigma):\mu\in\cM_\sigma(S_\beta),\mu[0]=a\Big\}.$$
\end{lemma}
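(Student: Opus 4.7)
The plan is to obtain Lemma \ref{topS} directly from Lemma \ref{psThm} by specializing to the singleton family $\cF=\{(\mathbbm{1}_{[0]},a,a)\}$ as already hinted in the paragraph preceding the statement. First I would identify the two sets on both sides of Lemma \ref{psThm} in this concrete case. Since
$$\int \mathbbm{1}_{[0]}\,d\cE_n(w)=\frac{1}{n}\sum_{i=0}^{n-1}\mathbbm{1}_{[0]}(\sigma^i w)=\frac{\sharp\{1\le k\le n:w_k=0\}}{n},$$
the constraint $a\le\varliminf\le\varlimsup\le a$ is equivalent to saying the limit exists and equals $a$, so $S_{\beta,\cF}=S_a$. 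Likewise, the constraint $a\le\int\mathbbm{1}_{[0]}\,d\mu\le a$ is precisely $\mu[0]=a$, so $\cM_{\beta,\cF}=\{\mu\in\cM_\sigma(S_\beta):\mu[0]=a\}$.

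Next I would verify the hypotheses of Lemma \ref{psThm} in the case $\cM_{\beta,\cF}\ne\emptyset$. The cylinder $[0]$ is clopen in $(S_\beta,d_\beta)$ (this is why the paper emphasizes that $\mathbbm{1}_{[0]}$, unlike $\mathbbm{1}_{[0,1/\beta)}$ on $[0,1]$, is continuous), so $\mu\mapsto\mu[0]=\int\mathbbm{1}_{[0]}\,d\mu$ is weak-$*$ continuous on $\cM_\sigma(S_\beta)$; hence $\cM_{\beta,\cF}$ is the pre-image of the closed set $\{a\}$ and is closed. For connectedness, I would note that for $\mu_1,\mu_2\in\cM_{\beta,\cF}$ and $t\in[0,1]$, the convex combination $t\mu_1+(1-t)\mu_2$ is $\sigma$-invariant with $[0]$-mass $ta+(1-t)a=a$, so $\cM_{\beta,\cF}$ is convex, in particular connected. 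Lemma \ref{psThm} then immediately yields the claimed equality.

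It remains to handle the degenerate case $\cM_{\beta,\cF}=\emptyset$, in which the right-hand side is $0$ by convention; I must then show $h_{top}(S_a,\sigma)=0$. I would argue the contrapositive: if $S_a\ne\emptyset$, pick any $w\in S_a$ and consider the empirical measures $\cE_n(w)$. Since $(S_\beta,d_\beta)$ is compact, the space of Borel probability measures on $S_\beta$ is weak-$*$ compact, so some subsequence $\cE_{n_k}(w)$ converges weakly to a probability measure $\mu$; the standard Krylov--Bogolyubov argument shows $\mu$ is $\sigma$-invariant, and continuity of $\mathbbm{1}_{[0]}$ gives $\mu[0]=\lim_k \int\mathbbm{1}_{[0]}\,d\cE_{n_k}(w)=a$. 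Thus $\mu\in\cM_{\beta,\cF}$, contradicting $\cM_{\beta,\cF}=\emptyset$. Consequently $S_a=\emptyset$ and the empty set has topological entropy $0$, closing this case.

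The only genuine obstacle is a bookkeeping one: confirming that the statement of Lemma \ref{psThm} quoted from \cite{PS05} covers a one-element family $\cF$ whose observable is a characteristic function of a clopen cylinder, and that the Bowen-style topological entropy for non-compact sets used in \cite{PS05} agrees with the notion invoked in Lemma \ref{topS}. Once those correspondences are recorded, everything else is the routine identification carried out above.
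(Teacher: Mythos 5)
Your proposal is correct and takes essentially the same route as the paper: it likewise specializes Lemma \ref{psThm} to the singleton $\cF=\{(\mathbbm{1}_{[0]},a,a)\}$, verifies that $\{\mu\in\cM_\sigma(S_\beta):\mu[0]=a\}$ is closed (via continuity of $\mathbbm{1}_{[0]}$ on the cylinder $[0]$) and connected (the paper writes out path-connectedness of the convex combinations you invoke), and disposes of the empty case by the same Krylov--Bogolyubov extraction of an invariant weak-$*$ limit of empirical measures to conclude $S_a=\emptyset$. No gaps.
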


Hence, Proposition \ref{variational formula} follows from
\begin{eqnarray*}
\dim_H F_{\beta,a}&\xlongequal[\text{is countable}]{\pi_\beta(S_{\beta,a})\setminus F_{\beta,a}}&\dim_H \pi_\beta(S_{\beta,a})\\
&\xlongequal[\text{Theorem \ref{dimension equal}}]{\text{by}}&\dim_H(S_{\beta,a},d_\beta)\\
&\xlongequal[\text{Lemma \ref{dim-top}}]{\text{by}}&\frac{1}{\log\beta}\cdot h_{top}(S_{\beta,a},\sigma),
\end{eqnarray*}
where $\pi_\beta(S_{\beta,a})\setminus F_{\beta,a}$ is countable since we can check $\pi_\beta(S_{\beta,a})\setminus F_{\beta,_a}\subset\pi_\beta(S_\beta\setminus\Sigma_\beta)$ and Lemma \ref{charADM} implies that $S_\beta\setminus\Sigma_\beta$ is countable.

\begin{lemma}(\cite[Lemma 5.3]{T12})\label{dim-top} Let $\beta>1$. For any $Z\subset S_\beta$, we have $$\dim_H(Z,d_\beta)=\frac{1}{\log\beta}\cdot h_{top}(Z,\sigma).$$
\end{lemma}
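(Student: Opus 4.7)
My plan is to exploit the ultrametric structure of $(S_\beta, d_\beta)$: since $d_\beta(w, v) = \beta^{-k}$ where $k$ is the length of the longest common prefix of $w$ and $v$, the space satisfies the ultrametric inequality, and cylinders of order $n$ coincide with the closed $d_\beta$-balls of radius $\beta^{-n}$. Consequently, any subset of $S_\beta$ with $d_\beta$-diameter at most $\beta^{-n}$ is contained in a single cylinder of order $n$. This makes Hausdorff covers interchangeable, up to a uniform multiplicative constant, with cylinder covers: given a $\delta$-cover $\{U_i\}$ of $Z$ with $\beta^{-n_i-1} < |U_i| \le \beta^{-n_i}$, I replace each $U_i$ by the cylinder $C_i$ of order $n_i$ containing it, so that $\sum |C_i|^t \le \beta^t \sum |U_i|^t$. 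Thus $\dim_H(Z, d_\beta)$ is unchanged if one restricts to cylinder covers in the definition of Hausdorff measure.

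Next, I would match cylinder covers with the covers appearing in Bowen's definition of topological entropy \cite{B73}. A direct calculation from $d_\beta(w, v) = \beta^{-k}$ shows that the dynamical Bowen ball
\[
B_n(x, \beta^{-k}) = \{y \in S_\beta : \max_{0 \le j < n} d_\beta(\sigma^j x, \sigma^j y) < \beta^{-k}\}
\]
is precisely the cylinder of order $n + k$ containing $x$. Hence Bowen covers of $Z$ at scale $\epsilon = \beta^{-k}$ with indices $n_i \ge N$ are exactly the same objects as cylinder covers of $Z$ with orders $\ge N + k$. Setting $s = t \log\beta$, the Bowen weight and the Hausdorff weight on a common cylinder cover satisfy $e^{-s n_i} = \beta^{tk} \cdot (\beta^{-(n_i+k)})^t$, so the two cover sums agree up to the uniform multiplicative factor $\beta^{tk}$.

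These two correspondences yield both inequalities simultaneously. If $t > h_{top}(Z, \sigma)/\log\beta$, I pick $\epsilon = \beta^{-k}$ and $s = t \log\beta$ so that Bowen covers at this scale can be chosen with $\sum e^{-s n_i}$ arbitrarily small as $N \to \infty$; the corresponding cylinder covers then drive $\cH^t_{\beta^{-(N+k)}}(Z, d_\beta)$ to zero, giving $\dim_H(Z, d_\beta) \le t$. Conversely, if $t > \dim_H(Z, d_\beta)$, I take Hausdorff $\delta$-covers of $Z$ with $\sum |U_i|^t$ arbitrarily small; refining to cylinder covers via the first paragraph and reading these off as Bowen covers at a fixed scale $\epsilon = \beta^{-k}$ with orders $n_i + k$ tending to infinity produces Bowen sums tending to zero, giving $h_{top}(Z, \sigma) \le t \log\beta$.

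The main subtlety I anticipate is handling Bowen's continuous limit $\lim_{\epsilon \to 0}$: the matching is cleanest when $\epsilon = \beta^{-k}$, and I would need to argue that this restriction is harmless. This follows from the observation that the values of $d_n$ lie in $\{0\} \cup \{\beta^{-m} : m \ge 0\}$, so the Bowen ball $B_n(x, \epsilon)$ is a step function of $\epsilon$: it is constant on each interval $\epsilon \in (\beta^{-k-1}, \beta^{-k}]$. Hence the associated Bowen cover sums depend on $\epsilon$ only through the integer $k$, and Bowen's continuous limit reduces to the discrete limit $k \to \infty$.
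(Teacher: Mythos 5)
Your proof is correct, and there is nothing in the paper to diverge from: the paper does not prove this lemma at all but imports it from \cite[Lemma 5.3]{T12}, and your argument is essentially the standard one behind that citation, resting on the observation that in the ultrametric $(S_\beta,d_\beta)$ the Bowen ball $B_n(x,\beta^{-k})$ is exactly the cylinder of order $n+k$ containing $x$, so that with $s=t\log\beta$ Bowen sums and cylinder-cover Hausdorff sums differ only by the harmless factor $\beta^{tk}$.

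Three micro-patches, none affecting the structure. (i) A cover element with $|U_i|=0$ admits no $n_i$ with $\beta^{-n_i-1}<|U_i|$; since diameters cannot be enlarged continuously in this discrete-valued metric, cover such singletons directly by cylinders of order so large that their total nominal weight is at most $\eta 2^{-i}$ (this also uses $t>0$, which is harmless for comparing critical exponents). (ii) In the converse direction, the cylinder of order $n_i$ is the Bowen ball $B_{n_i-k}(x_i,\beta^{-k})$, so the Bowen orders are $n_i-k$, not ``$n_i+k$'' as you wrote; they still tend to infinity with $N$ once $\delta<\beta^{-(N+k)}$, and the weight bound $e^{-s(n_i-k)}\le\beta^{t(k+1)}|U_i|^t$ goes through. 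Also note that the true Hausdorff weight of a cylinder is $|C_i|^t\le(\beta^{-(n_i+k)})^t$ rather than equal to it; the inequality points the right way in each direction, as your two-sided argument implicitly uses. (iii) Your step-function treatment of the $\epsilon\to0$ limit is fine but can be shortened: $h(Z,\epsilon)$ is monotone in $\epsilon$, so the limit along $\epsilon=\beta^{-k}$ suffices, and your correspondence in fact shows $h(Z,\beta^{-k})$ is independent of $k$, since the constant $\beta^{tk}$ cannot move a critical exponent. Finally, you work with the Bowen-ball form of the definition of $h_{top}$ for non-compact sets; this is the form used in \cite{T12} and \cite{PS05} and is classically equivalent to the open-cover definition of \cite{B73}, which is worth one sentence if the proof is to stand alone.
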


We give the following proofs to end this section.

\begin{proof}[Proof of Lemma \ref{topS}]
In Definition \ref{psDef} (2), let $\cF$ be the singleton $\{(\mathbbm{1}_{[0]},a,a)\}$. Then
$$S_{\beta,\cF}=\Big\{w\in S_\beta:\lim_{n\to\infty}\frac{1}{n}\sum_{i=0}^{n-1}\mathbbm{1}_{[0]}(\sigma^iw)=a\Big\}=S_{\beta,a}$$
and
$$\cM_{\beta,\cF}=\Big\{\mu\in\cM_\sigma(S_\beta):\mu[0]=a\Big\}\xlongequal[\text{by}]{\text{denote}}:\cM_{\beta,a}.$$
\item(1) If $\cM_{\beta,a}=\emptyset$, we can prove $S_{\beta,a}=\emptyset$ (and then the conclusion follows).
\newline(By contradiction) If $S_{\beta,a}\neq\emptyset$, there exists $w\in S_{\beta,a}$. For any $n\in\N$, let
$$\mu_n:=\cE_n(w)\in\cM(S_\beta):=\{\text{Borel probability measures on } S_\beta\}.$$
Since $\cM(S_\beta)$ is compact, there exists subsequence $\{\mu_{n_k}\}_{k\in\N}\subset\{\mu_n\}_{n\in\N}$ and $\mu\in\cM(S_\beta)$ such that $\mu_{n_k}\overset{w^*}{\to}\mu$ (i.e. $\mu_{n_k}$ converge to $\mu$ under the weak* topology). By $\mu_{n_k}\circ\sigma^{-1}\overset{w^*}{\to}\mu\circ\sigma^{-1}$ and $\mu_{n_k}\circ\sigma^{-1}-\mu_{n_k}\overset{w^*}{\to}0$, we get $\mu\circ\sigma^{-1}=\mu$ and then $\mu\in\cM_\sigma(S_\beta)$. It follows from
$$\mu[0]=\int\mathbbm{1}_{[0]}\text{ }d\mu=\lim_{k\to\infty}\int\mathbbm{1}_{[0]}\text{ }d\mu_{n_k}=\lim_{k\to\infty}\frac{1}{n_k}\sum_{i=0}^{n_k-1}\mathbbm{1}_{[0]}(\sigma^iw)\xlongequal[]{w\in S_{\beta,a}}a$$
that $\mu\in\cM_{\beta,a}$, which contradicts $\cM_{\beta,a}=\emptyset$.
\item(2) If $\cM_{\beta,a}\neq\emptyset$, by Lemma \ref{psThm}, it suffices to prove that $\cM_{\beta,a}$ is a closed connected set in $\cM_\sigma(S_\beta)$.
\begin{itemize}
\item[\textcircled{\footnotesize{$1$}}] Prove that $\cM_{\beta,a}$ is closed.
  \newline Let $\{\mu_n,n\in\N\}\subset\cM_{\beta,a}$ and $\mu\in \cM_\sigma(S_\beta)$ such that $\mu_n\overset{w^*}{\to}\mu$. It follows from
  $$\mu[0]=\int\mathbbm{1}_{[0]}\text{ }d\mu=\lim_{n\to\infty}\int\mathbbm{1}_{[0]}\text{ }d\mu_n=\lim_{n\to\infty}\mu_n[0]=a$$
  that $\mu\in\cM_{\beta,a}$.
\item[\textcircled{\footnotesize{$2$}}] Prove that $\cM_{\beta,a}$ is connected.
  \newline It suffices to prove that $\cM_{\beta,a}$ is path connected. In fact, for any $\mu_0,\mu_1\in\cM_{\beta,a}$, we define the path $f:[0,1]\to\cM_{\beta,a}$ by $f(s):=\mu_s:=(1-s)\mu_0+s\mu_1$ for $s\in[0,1]$. Then $f(0)=\mu_0$, $f(1)=\mu_1$ and $f([0,1])\subset\cM_{\beta,a}$. It remains to show that $f$ is continuous. Let $\{s,s_n,n\ge1\}\subset[0,1]$ such that $s_n\to s$. We only need to prove $f(s_n)\to f(s)$, i.e., $\mu_{s_n}\overset{w^*}{\to}\mu_s$. Let $\phi:S_\beta\to\R$ be a continuous function. It suffices to check $\int\phi$ $d\mu_{s_n}\to\int\phi$ $d\mu_s$, i.e.,
  $$(1-s_n)\int\phi\text{ }d\mu_0+s_n\int\phi\text{ }d\mu_1\to(1-s)\int\phi\text{ }d\mu_0+s\int\phi\text{ }d\mu_1.$$
  This follows immediately from $s_n\to s$.
\end{itemize}
\end{proof}

\begin{proof}[Proof of Theorem \ref{Markov variational formula}] By Proposition \ref{variational formula} it suffices to consider the following (1), (2) and (3).
\newline(1) We have
\begin{eqnarray*}
& &\sup\Big\{h_\mu(\sigma):\mu\in\cM_\sigma(S_\beta),\mu[0]=a,\mu\text{ is an }(m-1)\text{-Markov measure}\Big\}\\
&\le&\sup\Big\{h_\mu(\sigma):\mu\in\cM_\sigma(S_\beta),\mu[0]=a\Big\}\\
&\le&\sup\Big\{\frak{h}_\mu(\beta,m):\mu\text{ is an }(\beta,m,a)\text{-coordinated set function}\Big\}.
\end{eqnarray*}
Since the first inequality is obvious, we only prove the second one as follows. Let $\mu\in\cM_\sigma(S_\beta)$ such that $\mu[0]=a$. Restricted to $\{[w]:w\in\cup_{n=1}^m\Sigma_\beta^n\}$, $\mu$ is obviously an $(\beta,m,a)$-coordinated set function. It suffices to prove $h_\mu(\sigma)\le\frak{h}_\mu(\beta,m)$. Using $\cP:=\{[0],[1]\}$ as a partition generator of the Borel sigma-algebra on $(S_\beta,d_\beta)$, by simple calculation, we get that the conditional entropy of $\cP$ given $\bigvee_{k=1}^{m-1}\sigma^{-k}\cP$ with respect to $\mu$, denoted by $H_\mu\Big(\cP\mid\bigvee_{k=1}^{m-1}\sigma^{-k}\cP\Big)$, is equal to $\frak{h}_\mu(\beta,m)$. Since $H_\mu\Big(\cP\mid\bigvee_{k=1}^{n-1}\sigma^{-k}\cP\Big)$ decreases as $n$ increases and \cite[Theorem 4.14]{W82} says that it converges to $h_\mu(\sigma)$, we get $h_\mu(\sigma)\le\frak{h}_\mu(\beta,m)$. In the following we attached the calculation mentioned above.
\begin{align*}
&H_\mu\Big(\cP\mid\bigvee_{k=1}^{m-1}\sigma^{-k}\cP\Big)=H_\mu\Big(\cP\mid\sigma^{-1}(\bigvee_{k=0}^{m-2}\sigma^{-k}\cP)\Big)\\
=&-\sum_{P\in\cP,\text{ }Q\in\bigvee_{k=0}^{m-2}\sigma^{-k}\cP}\mu(P\cap\sigma^{-1}Q)\log\frac{\mu(P\cap\sigma^{-1}Q)}{\mu(\sigma^{-1}Q)}\\
=&-\sum_{w_1\cdots w_m\in\Sigma_\beta^*}\mu[w_1\cdots w_m]\log\frac{\mu[w_1\cdots w_m]}{\mu(\sigma^{-1}[w_2\cdots w_m])}\\
=&\sum_{w_1\cdots w_m\in\Sigma_\beta^*}\mu[w_1\cdots w_m]\log\mu[w_2\cdots w_m]-\sum_{w_1\cdots w_m\in\Sigma_\beta^*}\mu[w_1\cdots w_m]\log\mu[w_1\cdots w_m]\\
=&\sum_{w_2\cdots w_m\in\Sigma_\beta^*}\mu[w_2\cdots w_m]\log\mu[w_2\cdots w_m]-\sum_{w_1\cdots w_m\in\Sigma_\beta^*}\mu[w_1\cdots w_m]\log\mu[w_1\cdots w_m]\\
=&\sum_{w_1\cdots w_{m-1}\in\Sigma_\beta^*}\mu[w_1\cdots w_{m-1}]\log\mu[w_1\cdots w_{m-1}]-\sum_{w_1\cdots w_m\in\Sigma_\beta^*}\mu[w_1\cdots w_m]\log\mu[w_1\cdots w_m]\\
=&\sum_{w_1\cdots w_m\in\Sigma_\beta^*}\mu[w_1\cdots w_m]\log\mu[w_1\cdots w_{m-1}]-\sum_{w_1\cdots w_m\in\Sigma_\beta^*}\mu[w_1\cdots w_m]\log\mu[w_1\cdots w_m]\\
=&-\sum_{w_1\cdots w_m\in\Sigma_\beta^*}\mu[w_1\cdots w_m]\log\frac{\mu[w_1\cdots w_m]}{\mu[w_1\cdots w_{m-1}]}=\frak{h}_\mu(\beta,m).
\end{align*}
(2) Prove
$$\begin{aligned}
&\Big\{h_\mu(\sigma):\mu\in\cM_\sigma(S_\beta),\mu[0]=a,\mu\text{ is an }(m-1)\text{-Markov measure}\Big\}\\
&=\Big\{\frak{h}_\mu(\beta,m):\mu\text{ is an }(\beta,m,a)\text{-coordinated set function}\Big\}.
\end{aligned}$$
\boxed{\subset} follows from the facts that every $(m-1)$-Markov measure $\mu\in M_\sigma(S_\beta)$ with $\mu[0]=a$ restricted to $\{[w]:w\in\cup_{n=1}^m\Sigma_\beta^n\}$ is an $(\beta,m,a)$-coordinated set function and Proposition \ref{Markov entropy} implies $h_\mu(\sigma)=\frak{h}_\mu(\beta,m)$.
\newline\boxed{\supset} Let $\mu$ be an $(\beta,m,a)$-coordinated set function. By the entropy formula Proposition \ref{Markov entropy}, it suffices to show that $\mu$ can be extended to be an $(m-1)$-Markov measure in $\cM_\sigma(S_\beta)$. Note that $\mu$ is already defined on all the cylinders of order $\le m$. Suppose that for some $n\ge m$, $\mu$ is already defined on $\{[w_1\cdots w_n]:w_1\cdots w_n\in\Sigma_\beta^n\}$. Then we define
$$\mu[w_1\cdots w_{n+1}]:=\mu[w_1\cdots w_n]\cdot\frac{\mu[w_{n-m+2}\cdots w_{n+1}]}{\mu[w_{n-m+2}\cdots w_n]}$$
where the right hand side is regarded as $0$ if one of $\mu[w_1\cdots w_n]$, $\mu[w_{n-m+2}\cdots w_n]$ and $\mu[w_{n-m+2}\cdots w_{n+1}]$ is $0$. By Proposition \ref{extend} it suffices to check
$$\text{\textcircled{\footnotesize{1}} }\sum_{\substack{v=0,1\\wv\in\Sigma_\beta^*}}\mu[wv]=\mu[w]\quad\text{and}\quad\text{\textcircled{\footnotesize{2}} }\sum_{\substack{u=0,1\\uw\in\Sigma_\beta^*}}\mu[uw]=\mu[w]$$
for all $w\in\Sigma_\beta^n$ with $n\ge m$. (Note that for $n\le m-1$, \textcircled{\footnotesize{1}} and \textcircled{\footnotesize{2}} are already guaranteed by the condition that $\mu$ is $(\beta,m,a)$-coordinated.)
\newline\textcircled{\footnotesize{1}} Let $n\ge m$ and $w_1\cdots w_n\in\Sigma_\beta^n$.
\begin{itemize}
\item[i)] If $w_1\cdots w_n1\in\Sigma_\beta^*$, then
$$\begin{aligned}
&\sum_{\substack{v=0,1\\w_1\cdots w_nv\in\Sigma_\beta^*}}\mu[w_1\cdots w_nv]=\mu[w_1\cdots w_n0]+\mu[w_1\cdots w_n1]\\
&=\mu[w_1\cdots w_n]\cdot\frac{\mu[w_{n-m+2}\cdots w_n0]}{\mu[w_{n-m+2}\cdots w_n]}+\mu[w_1\cdots w_n]\cdot\frac{\mu[w_{n-m+2}\cdots w_n1]}{\mu[w_{n-m+2}\cdots w_n]}\\
&\overset{(\star)}{=}\mu[w_1\cdots w_n],
\end{aligned}$$
where ($\star$) can be proved as follows.
\newline \textcircled{\footnotesize{a}} If $\mu[w_1\cdots w_n]=0$, then ($\star$) is obvious.
\newline \textcircled{\footnotesize{b}} If $\mu[w_{n-m+2}\cdots w_n]=0$, since the fact that $\mu$ is $(\beta,m,a)$-coordinated implies $\mu[w_{n-m+1}\cdots w_n]\le\mu[w_{n-m+2}\cdots w_n]$, we get $\mu[w_{n-m+1}\cdots w_n]=0$. Then
$$\mu[w_1\cdots w_n]=\mu[w_1\cdots w_{n-1}]\cdot\frac{\mu[w_{n-m+1}\cdots w_n]}{\mu[w_{n-m+1}\cdots w_{n-1}]}=0$$
and ($\star$) follows.
\newline \textcircled{\footnotesize{c}} If $\mu[w_1\cdots w_n]\neq0$ and $\mu[w_{n-m+2}\cdots w_n]\neq0$, then ($\star$) follows from
$$\mu[w_{n-m+2}\cdots w_n0]+\mu[w_{n-m+2}\cdots w_n1]=\mu[w_{n-m+2}\cdots w_n],$$
noting that $\mu$ is $(\beta,m,a)$-coordinated.
\item[ii)] If $w_1\cdots w_n1\notin\Sigma_\beta^*$, by Proposition \ref{every m} and $w_1\cdots w_n\in\Sigma_\beta^*$ we get $w_{n-m+2}\cdots w_n1\notin\Sigma_\beta^*$. Since $\mu$ is $(\beta,m,a)$-coordinated, we get $\mu[w_{n-m+2}\cdots w_n0]=\mu[w_{n-m+2}\cdots w_n]$ and then
$$\begin{aligned}
&\sum_{\substack{v=0,1\\w_1\cdots w_nv\in\Sigma_\beta^*}}\mu[w_1\cdots w_nv]=\mu[w_1\cdots w_n0]\\
&=\mu[w_1\cdots w_n]\cdot\frac{\mu[w_{n-m+2}\cdots w_n0]}{\mu[w_{n-m+2}\cdots w_n]}\overset{(\star)}{=}\mu[w_1\cdots w_n],
\end{aligned}$$
where ($\star$) follows in the same way as i) \textcircled{\footnotesize{b}} if $\mu[w_{n-m+2}\cdots w_n]=0$.
\end{itemize}
\textcircled{\footnotesize{2}} Prove $\sum_{\substack{u=0,1\\uw_1\cdots w_n\in\Sigma_\beta^*}}\mu[uw_1\cdots w_n]=\mu[w_1\cdots w_n]$ for all $w_1\cdots w_n\in\Sigma_\beta^n$ and $n\ge m$ by induction. Since $\mu$ is $(\beta,m,a)$-coordinated, the conclusion is true for $n=m-1$. Now suppose that the conclusion is already true for some $n\ge m-1$. We consider $n+1$ in the following. Let $w_1\cdots w_{n+1}\in\Sigma_\beta^{n+1}$.
\begin{itemize}
\item[i)] If $1w_1\cdots w_{n+1}\in\Sigma_\beta^*$, then $1w_1\cdots w_n\in\Sigma_\beta^*$ and
$$\begin{aligned}
&\sum_{\substack{u=0,1\\uw_1\cdots w_{n+1}\in\Sigma_\beta^*}}\mu[uw_1\cdots w_{n+1}]=\mu[0w_1\cdots w_{n+1}]+\mu[1w_1\cdots w_{n+1}]\\
&=\mu[0w_1\cdots w_n]\cdot\frac{\mu[w_{n-m+2}\cdots w_{n+1}]}{\mu[w_{n-m+2}\cdots w_n]}+\mu[1w_1\cdots w_n]\cdot\frac{\mu[w_{n-m+2}\cdots w_{n+1}]}{\mu[w_{n-m+2}\cdots w_n]}\\
&\overset{(\star)}{=}\mu[w_1\cdots w_n]\cdot\frac{\mu[w_{n-m+2}\cdots w_{n+1}]}{\mu[w_{n-m+2}\cdots w_n]}=\mu[w_1\cdots w_{n+1}]
\end{aligned}$$
where ($\star$) follows from inductive hypothesis.
\item[ii)] If $1w_1\cdots w_{n+1}\notin\Sigma_\beta^*$, by Proposition \ref{every m} and $w_1\cdots w_{n+1}\in\Sigma_\beta^*$ we get $1w_1\cdots w_n\notin\Sigma_\beta^*$, and then
$$\begin{aligned}
\sum_{\substack{u=0,1\\uw_1\cdots w_{n+1}\in\Sigma_\beta^*}}\mu[uw_1\cdots w_{n+1}]&=\mu[0w_1\cdots w_{n+1}]=\mu[0w_1\cdots w_n]\cdot\frac{\mu[w_{n-m+2}\cdots w_{n+1}]}{\mu[w_{n-m+2}\cdots w_n]}\\
&\overset{(\star)}{=}\mu[w_1\cdots w_n]\cdot\frac{\mu[w_{n-m+2}\cdots w_{n+1}]}{\mu[w_{n-m+2}\cdots w_n]}=\mu[w_1\cdots w_{n+1}]
\end{aligned}$$
where ($\star$) follows from inductive hypothesis.
\end{itemize}
(3) By the definition of $(\beta,m,a)$-coordinated set functions and $\frak{h}_\mu(\beta,m)$, it is straightforward to see that the supremum of
$$\Big\{\frak{h}_\mu(\beta,m):\mu\text{ is an }(\beta,m,a)\text{-coordinated set function}\Big\}$$
can be achieved as a maximum.
\end{proof}

\section{Proof of Theorem \ref{pseudo-golden ratios}}

We need the following lemma which follows immediately from the convexity of the function $x\log x$.

\begin{lemma}\label{inequality}
Let $\phi:[0,\infty)\to\R$ be defined by
$$\phi(x)=\left\{\begin{array}{ll}
0 & \mbox{if } x=0;\\
-x\log x & \mbox{if } x>0.
\end{array}\right.$$
Then for all $x,y\in[0,\infty)$ and $a,b\ge0$ with $a+b=1$,
$$a\phi(x)+b\phi(y)\le\phi(ax+by).$$
The equality holds if and only if $x=y$, $a=0$ or $b=0$.
\end{lemma}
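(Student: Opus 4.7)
The statement is the standard (strict) concavity of the entropy function $\phi(x) = -x\log x$ on $[0,\infty)$, extended by $\phi(0)=0$. My plan is to reduce it to a second-derivative computation on $(0,\infty)$, then handle the boundary point $x = 0$ by continuity, and finally analyze the equality case separately.

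First, I would note that $\phi$ is continuous on $[0,\infty)$ because $x\log x \to 0$ as $x\to 0^+$, so it suffices to establish the inequality together with its equality criterion on the open half-line $(0,\infty)$ and then pass to the closure. On $(0,\infty)$, the function $\phi$ is twice differentiable with $\phi'(x) = -\log x - 1$ and $\phi''(x) = -1/x < 0$, so $\phi$ is strictly concave on $(0,\infty)$. This gives the inequality $a\phi(x)+b\phi(y)\le \phi(ax+by)$ for all $x,y>0$ and $a,b\ge 0$ with $a+b=1$, with strict inequality whenever $x\ne y$ and both $a,b>0$.

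Next I would extend the inequality to the boundary. If $x=0$ and $y>0$ (the case $y=0$ is symmetric), I would pick a sequence $x_n \downarrow 0$ in $(0,\infty)$, apply strict concavity to $(x_n,y)$, and let $n\to\infty$, using continuity of $\phi$ at $0$ to conclude $a\phi(0)+b\phi(y)\le\phi(by)$. If both $x=y=0$, both sides equal $0$ and the inequality holds with equality. The cases $a=0$ or $b=0$ are immediate and give equality.

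For the equality characterization, the only nontrivial direction is to show that if $a\phi(x)+b\phi(y)=\phi(ax+by)$ with $a,b\in(0,1)$, then $x=y$. If $x,y>0$, this follows directly from strict concavity of $\phi$ on $(0,\infty)$. The remaining case is when one of them, say $x=0$, while $y>0$; here a direct computation gives
\begin{equation*}
\phi(by) - \bigl(a\phi(0)+b\phi(y)\bigr) = -by\log(by) + by\log y = -by\log b > 0,
\end{equation*}
since $b\in(0,1)$ and $y>0$, so equality cannot hold. This completes the equivalence. I do not expect any real obstacle here; the only minor care needed is the continuity argument at $x=0$ and the explicit check that mixing a positive value with $0$ strictly increases $\phi$, which is where the singular behavior of $\phi'$ at $0$ could in principle cause trouble but in fact does not.
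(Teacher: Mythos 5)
Your proof is correct and takes essentially the same route as the paper, which gives no detailed argument at all and simply asserts that the lemma ``follows immediately from the convexity of the function $x\log x$''. Your second-derivative computation on $(0,\infty)$, the continuity argument at $0$, and the explicit check $\phi(by)-\bigl(a\phi(0)+b\phi(y)\bigr)=-by\log b>0$ for the mixed equality case merely supply the routine details the paper leaves implicit, and all of them are accurate.
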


\begin{proof}[Proof of Theorem \ref{pseudo-golden ratios}]
\item(1) By $\epsilon^*(1,\beta)=(1^{m-1}0)^\infty$ and Lemma \ref{charADM}, we know that for any $x\in[0,1)$, every $m$ consecutive digits in $\epsilon(x,\beta)$ must contain at least one $0$. This implies
    $$\#\{1\le k\le n:\varepsilon_k(x,\beta)=0\}\ge\lfloor\frac{n}{m}\rfloor$$
    for all $n\in\N$, and then
    $$\varliminf_{n\to\infty}\frac{\#\{1\le k\le n: \varepsilon_k(x,\beta)=0\}}{n}\ge\frac{1}{m}$$
    for any $x\in[0,1)$. If $0\le a<\frac{1}{m}$, we get $F_{\beta,a}=\emptyset$.
\item(2) When $\frac{1}{m}\le a\le1$, $f_a$ is a continuous function on its domain of definition
$$\begin{aligned}
D_{m,a}:=\Big\{(x_1,x_2,\cdots,x_{m-2}&)\in\R^{m-2}:\text{all terms in the }\log\text{'s in }f_a\text{ are non-negative}\Big\}\\
=\Big\{(x_1,x_2,\cdots,x_{m-2}&)\in\R^{m-2}:a\ge x_1\ge x_2\ge\cdots\ge x_{m-2}\ge0\quad\text{and}\\
&x_1+\cdots+x_{m-3}+x_{m-2}\le 1-a \le x_1+\cdots+x_{m-3}+2x_{m-2}\Big\},
\end{aligned}$$
which is closed and non-empty since
$$\left\{\begin{array}{ll}
(a,\frac{1-2a}{m-2},\cdots,\frac{1-2a}{m-2})\in D_{m,a} & \text{if } \frac{1}{m}\le a<\frac{1}{2};\\
(1-a,0,\cdots,0)\in D_{m,a} & \text{if } a\ge\frac{1}{2}.
\end{array}\right.$$
Therefore $\max_{(x_1,\cdots,x_{m-2})\in D_{m,a}} f_a(x_1,\cdots,x_{m-2})$ exists.

In order to get our conclusion, by Theorem \ref{Markov variational formula}, it suffices to prove
\begin{small}\begin{equation}\label{to prove 1.3.1}
\max\Big\{\frak{h}_\mu(\beta,m): \mu\text{ is an $(\beta,m,a)$-coordinated set function}\Big\}=\max_{(x_1,\cdots,x_{m-2})\in D_{m,a}} f_a(x_1,\cdots,x_{m-2})
\end{equation}\end{small}
in the following \textcircled{\footnotesize{$1$}} and \textcircled{\footnotesize{$2$}}, which are enlightened by drawing figures of the cylinders in $[0,1)$ and understanding their relations.
\newline\textcircled{\footnotesize{$1$}} Prove the inequality ``$\le$'' in (\ref{to prove 1.3.1}).
\newline Let $\mu$ be an $(\beta,m,a)$-coordinated set function. By Lemma \ref{charADM} we get $\Sigma_\beta^m=\{0,1\}^m\setminus\{1^m\}$, $\mu[1^{m-1}0]=\mu[1^{m-1}]$ and then
\begin{eqnarray*}
\frak{h}_\mu(\beta,m)&=&-\sum_{\substack{i_1,\cdots,i_m\in\{0,1\} \\ i_2\cdots i_{m-1}\neq1^{m-2}}}\mu[i_1\cdots i_m]\log\frac{\mu[i_1\cdots i_m]}{\mu[i_1\cdots i_{m-1}]}\\
& &-\mu[01^{m-2}0]\log\frac{\mu[01^{m-2}0]}{\mu[01^{m-2}]}-\mu[01^{m-1}]\log\frac{\mu[01^{m-1}]}{\mu[01^{m-2}]}.
\end{eqnarray*}
For $i_2\cdots i_{m-1}\neq1^{m-2}$ and $i_m\in\{0,1\}$, we can prove
\begin{small}\begin{equation}\label{can prove 2}
-\mu[0i_2\cdots i_m]\log\frac{\mu[0i_2\cdots i_m]}{\mu[0i_2\cdots i_{m-1}]}-\mu[1i_2\cdots i_m]\log\frac{\mu[1i_2\cdots i_m]}{\mu[1i_2\cdots i_{m-1}]}\\
\le-\mu[i_2\cdots i_m]\log\frac{\mu[i_2\cdots i_m]}{\mu[i_2\cdots i_{m-1}]}.
\end{equation}\end{small}
In fact, if $\mu[0i_2\cdots i_{m-1}]=0$, then $\mu[0i_2\cdots i_m]=0$. We get $\mu[1i_2\cdots i_{m-1}]=\mu[i_2\cdots i_{m-1}]-\mu[0i_2\cdots i_{m-1}]=\mu[i_2\cdots i_{m-1}]$ and $\mu[1i_2\cdots i_{m}]=\mu[i_2\cdots i_{m}]-\mu[0i_2\cdots i_{m}]=\mu[i_2\cdots i_{m}]$, which imply (\ref{can prove 2}). If $\mu[1i_2\cdots i_{m-1}]=0$, in the same way we can get (\ref{can prove 2}). If $\mu[0i_2\cdots i_{m-1}]\neq0$ and $\mu[1i_2\cdots i_{m-1}]\neq0$, then $\mu[i_2\cdots i_{m-1}]\neq0$ and (\ref{can prove 2}) follows from
\begin{eqnarray*}
& &-\mu[0i_2\cdots i_m]\log\frac{\mu[0i_2\cdots i_m]}{\mu[0i_2\cdots i_{m-1}]}-\mu[1i_2\cdots i_m]\log\frac{\mu[1i_2\cdots i_m]}{\mu[1i_2\cdots i_{m-1}]} \\
&=&\mu[i_2\cdots i_{m-1}]\Big(\frac{\mu[0i_2\cdots i_{m-1}]}{\mu[i_2\cdots i_{m-1}]}(-\frac{\mu[0i_2\cdots i_m]}{\mu[0i_2\cdots i_{m-1}]}\log\frac{\mu[0i_2\cdots i_m]}{\mu[0i_2\cdots i_{m-1}]})\\
& &\quad\quad\quad\quad\quad\quad+\frac{\mu[1i_2\cdots i_{m-1}]}{\mu[i_2\cdots i_{m-1}]}(-\frac{\mu[1i_2\cdots i_m]}{\mu[1i_2\cdots i_{m-1}]}\log\frac{\mu[1i_2\cdots i_m]}{\mu[1i_2\cdots i_{m-1}]})\Big) \\
&\le&-\mu[i_2\cdots i_m]\log\frac{\mu[i_2\cdots i_m]}{\mu[i_2\cdots i_{m-1}]},
\end{eqnarray*}
where the last inequality follows from Lemma \ref{inequality}. Thus
\begin{eqnarray*}
\frak{h}_\mu(\beta,m)&\le&-\sum_{\substack{i_2,\cdots,i_m\in\{0,1\}\\i_2\cdots i_{m-1}\neq1^{m-2}}}\mu[i_2\cdots i_m]\log\frac{\mu[i_2\cdots i_m]}{\mu[i_2\cdots i_{m-1}]} \\
& &-\mu[01^{m-2}0]\log\frac{\mu[01^{m-2}0]}{\mu[01^{m-2}]}-\mu[01^{m-1}]\log\frac{\mu[01^{m-1}]}{\mu[01^{m-2}]} \\
&=&-\sum_{\substack{i_1,\cdots,i_{m-1}\in\{0,1\}\\i_1\cdots i_{m-2}\neq1^{m-2}}}\mu[i_1\cdots i_{m-1}]\log\frac{\mu[i_1\cdots i_{m-1}]}{\mu[i_1\cdots i_{m-2}]}\\
& &-\mu[01^{m-2}0]\log\frac{\mu[01^{m-2}0]}{\mu[01^{m-2}]}-\mu[01^{m-1}]\log\frac{\mu[01^{m-1}]}{\mu[01^{m-2}]} \\
&=&-\sum_{\substack{i_1,\cdots,i_{m-1}\in\{0,1\}\\i_2\cdots i_{m-2}\neq1^{m-3}}}\mu[i_1\cdots i_{m-1}]\log\frac{\mu[i_1\cdots i_{m-1}]}{\mu[i_1\cdots i_{m-2}]}\\
& &-\mu[01^{m-3}0]\log\frac{\mu[01^{m-3}0]}{\mu[01^{m-3}]}-\mu[01^{m-2}]\log\frac{\mu[01^{m-2}]}{\mu[01^{m-3}]} \\
& &-\mu[01^{m-2}0]\log\frac{\mu[01^{m-2}0]}{\mu[01^{m-2}]}-\mu[01^{m-1}]\log\frac{\mu[01^{m-1}]}{\mu[01^{m-2}]}.
\end{eqnarray*}
For $i_2\cdots i_{m-2}\neq1^{m-3}$ and $i_{m-1}\in\{0,1\}$, in the same way as proving (\ref{can prove 2}), we get
\begin{align*}
-\mu[0i_2\cdots i_{m-1}]\log\frac{\mu[0i_2\cdots i_{m-1}]}{\mu[0i_2\cdots i_{m-2}]}-\mu[1i_2\cdots i_{m-1}]\log\frac{\mu[1i_2\cdots i_{m-1}]}{\mu[1i_2\cdots i_{m-2}]}\\
\le-\mu[i_2\cdots i_{m-1}]\log\frac{\mu[i_2\cdots i_{m-1}]}{\mu[i_2\cdots i_{m-2}]}.
\end{align*}
Thus
\begin{eqnarray*}
\frak{h}_\mu(\beta,m)&\le&-\sum_{\substack{i_2,\cdots,i_{m-1}\in\{0,1\}\\i_2\cdots i_{m-2}\neq1^{m-3}}}\mu[i_2\cdots i_{m-1}]\log\frac{\mu[i_2\cdots i_{m-1}]}{\mu[i_2\cdots i_{m-2}]}\\
& &-\mu[01^{m-3}0]\log\frac{\mu[01^{m-3}0]}{\mu[01^{m-3}]}-\mu[01^{m-2}]\log\frac{\mu[01^{m-2}]}{\mu[01^{m-3}]} \\
& &-\mu[01^{m-2}0]\log\frac{\mu[01^{m-2}0]}{\mu[01^{m-2}]}-\mu[01^{m-1}]\log\frac{\mu[01^{m-1}]}{\mu[01^{m-2}]} \\
&=&-\sum_{\substack{i_1,\cdots,i_{m-2}\in\{0,1\}\\i_1\cdots i_{m-3}\neq1^{m-3}}}\mu[i_1\cdots i_{m-2}]\log\frac{\mu[i_1\cdots i_{m-2}]}{\mu[i_1\cdots i_{m-3}]}\\
& &-\mu[01^{m-3}0]\log\frac{\mu[01^{m-3}0]}{\mu[01^{m-3}]}-\mu[01^{m-2}]\log\frac{\mu[01^{m-2}]}{\mu[01^{m-3}]} \\
& &-\mu[01^{m-2}0]\log\frac{\mu[01^{m-2}0]}{\mu[01^{m-2}]}-\mu[01^{m-1}]\log\frac{\mu[01^{m-1}]}{\mu[01^{m-2}]} \\
&=&-\sum_{\substack{i_1,\cdots,i_{m-2}\in\{0,1\}\\i_2\cdots i_{m-3}\neq1^{m-4}}}\mu[i_1\cdots i_{m-2}]\log\frac{\mu[i_1\cdots i_{m-2}]}{\mu[i_1\cdots i_{m-3}]}\\
& &-\mu[01^{m-4}0]\log\frac{\mu[01^{m-4}0]}{\mu[01^{m-4}]}-\mu[01^{m-3}]\log\frac{\mu[01^{m-3}]}{\mu[01^{m-4}]} \\
& &-\mu[01^{m-3}0]\log\frac{\mu[01^{m-3}0]}{\mu[01^{m-3}]}-\mu[01^{m-2}]\log\frac{\mu[01^{m-2}]}{\mu[01^{m-3}]} \\
& &-\mu[01^{m-2}0]\log\frac{\mu[01^{m-2}0]}{\mu[01^{m-2}]}-\mu[01^{m-1}]\log\frac{\mu[01^{m-1}]}{\mu[01^{m-2}]}.
\end{eqnarray*}
\newline$\cdots$
\newline Repeat the above process a finite number of times. Finally we get
\begin{eqnarray*}
\frak{h}_\mu(\beta,m)&\le&-\mu[00]\log\frac{\mu[00]}{\mu[0]}-\mu[01]\log\frac{\mu[01]}{\mu[0]} \\
& &-\mu[010]\log\frac{\mu[010]}{\mu[01]}-\mu[011]\log\frac{\mu[011]}{\mu[01]} \\
& &\cdots \\
& &-\mu[01^{m-3}0]\log\frac{\mu[01^{m-3}0]}{\mu[01^{m-3}]}-\mu[01^{m-2}]\log\frac{\mu[01^{m-2}]}{\mu[01^{m-3}]} \\
& &-\mu[01^{m-2}0]\log\frac{\mu[01^{m-2}0]}{\mu[01^{m-2}]}-\mu[01^{m-1}]\log\frac{\mu[01^{m-1}]}{\mu[01^{m-2}]}.
\end{eqnarray*}
Since $\mu$ is $(\beta,m,a)$-coordinated, we have
$$\left\{\begin{array}{ll}
\mu[0]=a, & \mu[1]=1-a, \\
\mu[00]+\mu[01]=\mu[0], & \mu[01]+\mu[11]=\mu[1], \\
\mu[010]+\mu[011]=\mu[01], & \mu[011]+\mu[111]=\mu[11], \\
\cdots, & \cdots, \\
\mu[01^{m-3}0]+\mu[01^{m-2}]=\mu[01^{m-3}], & \mu[01^{m-2}]+\mu[1^{m-1}]=\mu[1^{m-2}], \\
\mu[01^{m-2}0]+\mu[01^{m-1}]=\mu[01^{m-2}], & \mu[01^{m-1}]=\mu[1^{m-1}].
\end{array}\right.$$
Let $y_1:=\mu[01], y_2:=\mu[011], \cdots, y_{m-2}:=\mu[01^{m-2}]$. Then we have
\begin{small}
$$\left\{\begin{array}{l}
\mu[0]=a, \mu[00]=a-y_1, \mu[010]=y_1-y_2, \mu[0110]=y_2-y_3, \cdots, \mu[01^{m-3}0]=y_{m-3}-y_{m-2}, \\
\mu[1]=1-a, \mu[11]=1-a-y_1, \cdots, \mu[1^{m-1}]=1-a-y_1-y_2-\cdots-y_{m-2}, \\
\mu[01^{m-1}]=1-a-y_1-y_2-\cdots-y_{m-2}, \mu[01^{m-2}0]=y_1+y_2+\cdots+y_{m-3}+2y_{m-2}+a-1.
\end{array}\right.$$
\end{small}
By a simple calculation, we get
$$\frak{h}_\mu(\beta,m)\le f_a(y_1,\cdots,y_{m-2}).$$
It follows from $\mu[00],\mu[010],\cdots,\mu[01^{m-3}0],\mu[01^{m-2}0],\mu[01^{m-1}]\ge0$ that $(y_1,\cdots,y_{m-2})\in D_{m,a}$. Therefore
$$\frak{h}_\mu(\beta,m)\le\max_{(x_1,\cdots,x_{m-2})\in D_{m,a}} f_a(x_1,\cdots,x_{m-2}).$$
\newline\textcircled{\footnotesize{$2$}} Prove that the inequality ``$\le$'' in (\ref{to prove 1.3.1}) can achieve ``$=$'' by some $(\beta,m,a)$-coordinated set function.
\newline Let $(y_1,\cdots,y_{m-2})\in D_{m,a}$ such that $$f_a(y_1,\cdots,y_{m-2})=\max_{(x_1,\cdots,x_{m-2})\in D_{m,a}} f_a(x_1,\cdots,x_{m-2}).$$
Define
\begin{small}
\begin{eqnarray*}
\mu[0]:=a,& &\mu[1]:=1-a,\\
\mu[00]:=a-y_1,&\mu[01]=\mu[10]:=y_1,&\mu[11]:=1-a-y_1,\\
\mu[010]:=y_1-y_2,&\mu[011]=\mu[110]:=y_2,&\mu[111]:=1-a-y_1-y_2,\\
\cdots,&\cdots,&\cdots,\\
\mu[01^{m-3}0]:=y_{m-3}-y_{m-2},&\mu[01^{m-2}]=\mu[1^{m-2}0]:=y_{m-2},&\mu[1^{m-1}]:=1-a-y_1-\cdots-y_{m-2},
\end{eqnarray*}
$$\mu[01^{m-2}0]:=y_1+\cdots+y_{m-3}+2y_{m-2}+a-1,\mu[01^{m-1}]=\mu[1^{m-1}0]:=1-a-y_1-\cdots-y_{m-2}$$
\end{small}
and
\begin{eqnarray}\label{uwv}
\mu[uwv]:=\frac{\mu[uw]\cdot\mu[wv]}{\mu[w]}\quad\text{for }u,v\in\{0,1\}\quad\text{and}\quad w\in\bigcup_{k=1}^{m-2}\Big(\{0,1\}^k\setminus\{1^k\}\Big)
\end{eqnarray}
where $\mu[uwv]$ is defined to be $0$ if one of $\mu[w]$, $\mu[uw]$ and $\mu[wv]$ is $0$. Then $\mu$ is an $(\beta,m,a)$-coordinated set function. By (\ref{uwv}) and Lemma \ref{inequality}, it is straightforward to check that in the proof of \textcircled{\footnotesize{$1$}}, all the ``$\le$'' in the upper bound estimation of $\frak{h}_\mu(\beta,m)$ can take ``$=$'' and then
$$\frak{h}_\mu(\beta,m)=f_a(y_1,\cdots,y_{m-2})=\max_{(x_1,\cdots,x_{m-2})\in D_{m,a}} f_a(x_1,\cdots,x_{m-2}).$$
\end{proof}

\begin{ack}
The author is grateful to Professor Jean-Paul Allouche and Professor Bing Li for their advices on a former version of this paper, and also grateful to the Oversea Study Program of Guangzhou Elite Project (GEP) for financial support (JY201815).
\end{ack}

\end{document}